\documentclass[11pt,reqno]{amsart}
\usepackage{amssymb,mathtools,calc,verbatim,enumitem,tikz,url,hyperref,mathrsfs,cite,fullpage}
\usepackage{bbm}
\usepackage{stmaryrd}
\usepackage{textcomp}
\usepackage{setspace}
\usepackage{amsthm}
\usepackage{amsmath}
\usepackage{graphicx}
\usepackage{marvosym}
\usepackage{empheq}
\usepackage{latexsym}
\usepackage[T1]{fontenc}
\usepackage{color}
\usepackage{cleveref}
\usepackage{dsfont}

\usepackage{todonotes}

\newenvironment{poc}{\begin{proof}[Proof of claim]}{\end{proof}}

\newtheorem{theorem}{Theorem}[section]
\newtheorem{lemma}[theorem]{Lemma}

\newtheorem{proposition}[theorem]{Proposition}
\newtheorem{claim}[theorem]{Claim}
\newtheorem*{claim*}{Claim}

\theoremstyle{definition}

\newtheorem*{qu*}{Question}
\theoremstyle{remark}

\renewcommand\Pr{\operatorname{\mathbb{P}}}

\renewcommand\geq{\geqslant}
\renewcommand\le{\leqslant}
\renewcommand\ge{\geqslant}
\renewcommand\to{\rightarrow}

\begin{document}

\title{Tight Staircase Bounds for Cyclic Subsets below Dirac's Threshold}

\author{Hong Liu}
\address{ECOPRO, Institute for Basic Science, 55 Expo-ro, Yuseong-gu, Daejeon, 34126, Korea}
\email{hongliu@ibs.re.kr}

\author{Mengyuan Niu}
\address{School of Mathematics and Statistics, Zhengzhou University, Zhengzhou, China, and ECOPRO, Institute for Basic Science, 55 Expo-ro, Yuseong-gu, Daejeon, 34126, Korea}
\email{mengyuanniu@gs.zzu.edu.cn}

\author{Lanchao Wang}
\address{School of Mathematics, Nanjing University, Nanjing, China, and ECOPRO, Institute for Basic Science, 55 Expo-ro, Yuseong-gu, Daejeon, 34126, Korea}
\email{lanchaowang@foxmail.com}

\author{Zhifei Yan}
\address{ECOPRO, Institute for Basic Science, 55 Expo-ro, Yuseong-gu, Daejeon, 34126, Korea}
\email{zhifeiyan@ibs.re.kr}

\begin{abstract}
Let $\operatorname{Cyc}(G)$ denote the number of  cyclic subsets in a graph $G$, which are subsets that induce a Hamiltonian subgraph. Dragani\'{c}, Keevash and M\"{u}yesser  recently proved that every regular Dirac graph has $\Omega(2^n)$ cyclic subsets, resolving a  problem of Erd\H{o}s and Faudree.

We determine the sharp asymptotic lower bound  throughout the linear range below Dirac's threshold. 
Let $G$ be an $n$-vertex $d$-regular graph with $d=\Omega(n)$ and $d<n/2$, then
$$
        \operatorname{Cyc}(G)\ge (q-o(1))2^{n/q}, \quad \text{where } \quad q=\left\lfloor \frac{n}{d+1}\right\rfloor \ge 2.
$$
This bound is asymptotically best possible, including the leading coefficient $q$, as witnessed at the staircase levels by the disjoint union of $q$ equal cliques. Consequently, the optimal exponential rate changes by discrete jumps as $d$ crosses the thresholds $n/k$, rather than varying smoothly with $d$. We also prove the optimal exponential rate at the Dirac boundary: every $n$-vertex $n/2$-regular graph  satisfies        $\operatorname{Cyc}(G)\ge 2^{(1-o(1))n},$
which is sharp up to a subexponential factor by $K_{n/2,n/2}$. 
\end{abstract}

\maketitle

\section{Introduction}\label{sec:intro}

Hamiltonicity is a central example of how local degree conditions force global structure.  Dirac's theorem asserts that every \(n\)-vertex graph with minimum degree at least \(n/2\) is Hamiltonian, and much of extremal graph theory studies how far such a conclusion can be strengthened.  In this paper we study an enumerative strengthening: rather than asking only whether \(G\) is Hamiltonian, we ask how many induced subgraphs of \(G\) are Hamiltonian.
For a vertex subset \(S\subseteq V(G)\), we say that \(S\) is a \emph{cyclic subset} if the induced subgraph \(G[S]\) contains a Hamiltonian cycle.\footnote{Throughout this paper, to handle boundary cases uniformly, we also regard the empty set, one-vertex sets, and copies of \(K_2\) as cyclic. Thus every subset of a complete graph is cyclic.}  We write
$\operatorname{Cyc}(G)
        :=
        \bigl|\{S\subseteq V(G):S\text{ is cyclic}\}\bigr|.$

This viewpoint belongs to a broader line of work showing that Dirac-type degree conditions force Hamiltonicity in robust forms.  For example, Dirac graphs contain many Hamilton cycles~\cite{CK09,SSS03}, their Hamiltonicity is robust under random sparsification~\cite{KLS14}, and related random, resilience, packing, and decomposition problems have been studied extensively; see, for instance,~\cite{FK08,Montgomery19,KOT10,CKLOT16}.  Counting cyclic subsets can be viewed as a vertex-subset analogue of these robustness phenomena.  Related subset-counting problems have also been studied for other spanning structures, including \(K_r\)-factors in regular graphs~\cite{SWY25}, and for cyclic subsets in tournaments~\cite{HLMS26}.

A first benchmark is provided by the minimum-degree problem.  Koml\'{o}s conjectured in 1981 that every graph \(G\) with minimum degree \(d\) contains at least as many cyclic subsets as the complete graph \(K_{d+1}\), namely \(\operatorname{Cyc}(G)\ge 2^{d+1}\).  Kim, Liu, Sharifzadeh and Staden~\cite{KLSS17} proved this conjecture for all sufficiently large \(d\).  Thus, under a pure minimum-degree assumption, the guaranteed number of cyclic subsets is controlled by the local parameter \(d\).

A different phenomenon appears when the order of the graph $n$ is fixed and the degree is linear.  At  the Dirac scale, the local benchmark $2^d$ is exponentially smaller than the total number $2^n$ of vertex subsets, so it becomes natural to ask when one can force cyclic subsets on the scale of all subsets. Erd\H{o}s and Faudree~\cite{Erdos97} asked, in the regular setting, whether every $(m+1)$-regular graph on $2m$ vertices has  $\Omega(2^{2m})$ cyclic subsets.  Dragani\'{c}, Keevash and M\"{u}yesser~\cite{DKM25}  recently answered this question, proving a sharp positive-fraction result for regular graphs above the Dirac threshold.

Our main result determines what happens below that threshold.  The first obstruction is that a \(d\)-regular graph may have several components.  Since each component has at least \(d+1\) vertices, the number of components is at most
\[
        q=\left\lfloor\frac{n}{d+1}\right\rfloor.
\]
At the points \(d=n/q-1\), the disjoint union \(qK_{n/q}\) shows that one cannot force more than \(q\cdot 2^{n/q}\) cyclic subsets, up to lower-order terms.  This suggests that the correct exponential scale below the Dirac threshold is governed not directly by \(d\), but by the integer \(q\).  Theorem~\ref{thm:main} confirms this prediction throughout the whole linear below-Dirac range.  In particular, the optimal exponential rate changes in discrete steps: it remains \(n/q\) throughout the interval where \(q=\lfloor n/(d+1)\rfloor\), and then jumps when \(q\) changes.

\begin{theorem}\label{thm:main}
Given $\varepsilon,\xi > 0$, there exists $n_0$ such that the following holds for all $n \ge n_0$. Let $G$ be an $n$-vertex $d$-regular graph with $\varepsilon n\le d< n/2$, and let $q = \left\lfloor n/(d+1)\right\rfloor\ge 2$. Then
$$\operatorname{Cyc}(G)\geq (q-\xi)\,2^{n/q}.$$
\end{theorem}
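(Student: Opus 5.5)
We will derive Theorem~\ref{thm:main} from a structural decomposition of $G$ combined with a counting argument on each piece. First we apply the Szemer\'edi regularity lemma (with parameters depending on $\varepsilon,\xi$) to get a reduced graph $R$. In $R$, connected components of the ``dense'' part correspond to parts of $G$ that are robustly connected. For a vertex set $U$ that is robustly connected and has large minimum degree in $G[U]$, we count cyclic subsets inside $U$. The aim is to show that such a piece $U$ contributes at least roughly $2^{|U|-o(n)}$ cyclic subsets whenever $\delta(G[U])\ge (1/2+\gamma)|U|$. More generally, it should contribute $2^{(1-o(1))\,c(U)}$, where $c(U)$ is an appropriate ``effective size''. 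The tool is a robust Hamiltonicity statement, an absorption/connecting lemma in the style of Dragani\'c--Keevash--M\"uyesser: a random subset $S\subseteq U$, each vertex kept with probability $1/2$, still spans a robust expander with minimum degree about $|S|/2$, and hence $G[S]$ is Hamiltonian with high probability. This is how we obtain $\Omega(2^{|U|})$ cyclic subsets in the balanced regime.

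Next we need a partition lemma. Since $G$ is $d$-regular with $d\ge\varepsilon n$, we can partition $V(G)$, after deleting $o(n)$ vertices, into at most $q'$ clusters $U_1,\dots,U_{q'}$. Each cluster is either a near-clique-like robust expander or bipartite-like, and only $o(n^2)$ edges run between clusters. Each cluster has size at least about $d+1$, so $q'\le q$ (using $d+1>n/(q+1)$ with slack). The key counting inequality is then
\[
\operatorname{Cyc}(G)\ \ge\ \sum_{i=1}^{q'} \operatorname{Cyc}(G[U_i]) ,
\]
since cyclic subsets of different clusters are distinct. For a cluster of size $u_i\ge d+1-o(n)$ that is an expander, we aim to show $\operatorname{Cyc}(G[U_i])\ge (1-\xi/q)2^{u_i}$ if $u_i$ is close to $d+1$ (near-clique case). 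In the more general case we aim for $2^{u_i}$ times a factor that beats the loss. The bipartite-like clusters must be handled separately: there Hamiltonian subsets must be nearly balanced, giving about $2^{u_i}/\sqrt{u_i}$. However, regularity forces such a cluster to have $u_i\approx 2d$, which is much larger than $n/q$ when $q\ge2$, so it still contributes at least $2^{n/q}$ with room to spare.

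Finally we optimize. Given $\sum u_i\approx n$ and $u_i\ge d+1$, the sum $\sum 2^{u_i}$ is minimized by making the sizes equal. If $q'<q$, some $u_i\ge n/(q-1)-o(n)$, and then $2^{u_i}\gg q2^{n/q}$. If $q'=q$, all $u_i\in[d+1,\,n-(q-1)(d+1)]$, and convexity gives $\sum 2^{u_i}\ge q2^{n/q}$. This is sharp when $u_i=n/q$.

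The main obstacle is precision. We need $(1-o(1))2^{u_i}$ cyclic subsets per near-clique cluster, not merely $2^{(1-o(1))u_i}$, because the leading constant $q$ must be recovered exactly. This requires the deleted $o(n)$ exceptional vertices to cost only a $1-o(1)$ factor rather than $2^{-o(n)}$. We would handle this by not deleting vertices. Instead, we reassign every vertex to the cluster containing most of its neighbours; regularity gives each vertex many neighbours there. We would then show, via absorption, that almost all subsets $S\subseteq U_i$ (all but an $o(1)$ fraction) induce Hamiltonian graphs, using concentration of degrees in random halves.

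Even this may fail when $u_i$ is large, for example $u_i\approx 2d$, where $G[U_i]$ is only about $1/2$-dense. But there the exponent already exceeds $n/q$ by a linear amount, so the cruder bound $2^{(1-o(1))u_i}$ suffices. The delicate regime is therefore exactly $u_i\approx n/q\approx d+1$, where $G[U_i]$ is nearly complete and random subsets are Hamiltonian with probability $1-o(1)$ by Dirac's theorem applied to $G[S]$.
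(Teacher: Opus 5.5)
There is a genuine gap at the step ``each cluster has size at least about $d+1$, so $q'\le q$ (using $d+1>n/(q+1)$ with slack)''. There is no slack. The inequality $n<(q+1)(d+1)$ can hold with only $O(1)$ room, while your clusters are only guaranteed size $d+1-o(n)$. So $q'=q+1$ clusters of size about $n/(q+1)$ are possible, and they do occur.

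For $q=2$ and even $d$, take three disjoint copies of $K_d$ and add a perfect matching whose edges join different copies ($d/2$ edges between each pair of copies). This graph is $d$-regular on $n=3d$ vertices with $\lfloor n/(d+1)\rfloor=2$. Yet it has three near-clique clusters with only $O(n)$ edges between them. More generally, $q+1$ copies of $K_d$ joined by a cross perfect matching work for every $q$.

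In such graphs your inequality gives only $\sum_i\operatorname{Cyc}(G[U_i])\approx(q+1)2^{n/(q+1)}$, which is exponentially below $q2^{n/q}$. No amount of precision inside the clusters can repair this. The theorem holds for these graphs only because of cycles that \emph{cross between clusters}. Two disjoint matching edges between two copies of $K_d$ already give $2^{2d-4}=2^{2n/(q+1)-4}$ cyclic subsets, and a sum over clusters never sees them.

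What is missing is the paper's near-critical case (c). When the reduced graph has $q+1$ components, every block is nearly complete. Exact $d$-regularity together with $n<(q+1)(d+1)$ then forces compensating edges. Lemma~\ref{lem:compensating}, a degree-counting argument via K\H{o}nig's theorem, gives a matching of size $\Omega(\sqrt n)$ between some two blocks. Two of its edges are extended into the cores $A_i,A_j$. They are then combined with the $2^{|A_i|-O(1)}$ and $2^{|A_j|-O(1)}$ vertex sets carrying Hamiltonian paths between prescribed endpoints. This gives $2^{2n/(q+1)-o(n)}$ cyclic subsets, which beats $q2^{n/q}$ because $2/(q+1)>1/q$ for $q\ge 2$.

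Your treatment of $q'<q$ and $q'=q$ otherwise parallels the paper's cases (a) and (b), with two smaller caveats.
\begin{enumerate}
\item After you reassign exceptional vertices, Dirac's theorem no longer applies to $G[S]$. Such a vertex may have only about $d/q$ neighbours in its cluster. The paper uses Chv\'atal's degree-sequence theorem here instead.
\item The crude bound $2^{(1-o(1))u_i}$ for large or bipartite-like clusters needs an actual mechanism. The paper obtains it from a connected matching in $R$ together with the blow-up lemma applied to random balanced subpairs.
\end{enumerate}
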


Theorem~\ref{thm:main}  is sharp for every fixed below-Dirac step  $q\ge2$ as witnessed by the above union of cliques $qK_{n/q}$.
The exact boundary at $d=n/2$ behaves differently.  As  observed by Erd\H{o}s and Faudree~\cite{Erdos97}, the  balanced complete bipartite graph satisfies
$
\operatorname{Cyc}(K_{n/2,n/2})=\Theta\left(\frac{2^n}{\sqrt{n}}\right),
$
so  one cannot force a positive fraction of all subsets to be cyclic.  Nevertheless, the same methods give the correct exponential rate at the boundary.

\begin{proposition}\label{prop:dirac_rate}
For every $\xi>0$, there exists $n_0=n_0(\xi)$ such that the following holds for all $n\ge n_0$. Let $G$ be an $n$-vertex $n/2$-regular graph. Then
$$
\operatorname{Cyc}(G)\ge 2^{(1-\xi)n}.
$$
\end{proposition}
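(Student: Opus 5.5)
We prove this by a regularity-style structural analysis: either $G$ is close to $K_{n/2,n/2}$, or $G$ behaves like a robust expander on most of its vertices. In both cases we exhibit roughly $2^{(1-\xi)n}$ subsets that are Hamiltonian.

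Fix $\delta\ll\xi$ and apply the stability dichotomy for $n/2$-regular graphs. Either $G$ is a \emph{robust $(\nu,\tau)$-outexpander}, or $G$ is $\delta$-close to $K_{n/2,n/2}$, meaning there is a partition $V(G)=A\cup B$ with $|A|,|B|\approx n/2$ and $e(G[A])+e(G[B])\le \delta n^2$. (Closeness to $2K_{n/2}$ is impossible here: a regular graph of degree exactly $n/2$ cannot be almost disconnected into two halves.) This is the standard Kühn--Osthus--Treglown/Lo--Patel style dichotomy, and we take it as the framework also underlying Theorem~\ref{thm:main}.

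\textbf{Expander case.} Robust expansion is inherited by subsets: if $S$ is a uniformly random subset of size $s\ge(1/2+\eta)n$ (say), then with high probability $G[S]$ has minimum degree at least $\tau|S|$ and is still a robust outexpander. By the Kühn--Osthus--Treglown theorem, $G[S]$ is then Hamiltonian. Even more simply, we can restrict to $|S|\approx n/2\cdot(1+o(1))$ and use concentration of the degrees $d_S(v)\approx |S|/2$ together with the expansion to get Hamiltonicity. Either way, a $1-o(1)$ fraction of subsets of size close to $n/2$ are cyclic, giving $\Omega(2^n/\sqrt n)\ge 2^{(1-\xi)n}$ cyclic sets.

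\textbf{Extremal case.} Let $A,B$ be the near-bipartition, and move the few vertices with many neighbours on their own side into an exceptional set $X$, with $|X|\le\sqrt\delta n$. Every remaining vertex of $A$ has at least $(1/2-\sqrt\delta)n$ neighbours in $B$, and vice versa. Since $G$ is regular with degree exactly $n/2$, there is a perfect-ish matching structure, and $G[A\setminus X,B\setminus X]$ is a dense bipartite graph with minimum degree about half each side. We count sets of the form $S=A'\cup B'$ with $A'\subseteq A\setminus X$, $B'\subseteq B\setminus X$, $|A'|=|B'|$. For a random such pair of equal size at least $(1/2-\delta^{1/4})\cdot n/2$ each, Chernoff shows that the bipartite minimum degree condition $\delta(G[A',B'])>|A'|/2$ holds with high probability. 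The Moon--Moser bipartite Dirac theorem then gives a Hamilton cycle. The number of such balanced pairs is $\sum_k\binom{|A\setminus X|}{k}\binom{|B\setminus X|}{k}\ge 2^{(1-2\sqrt\delta)n}/n$, which exceeds $2^{(1-\xi)n}$ for small $\delta$.

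\textbf{Main obstacle.} The main obstacle is the dichotomy step at exactly $d=n/2$. We need that a non-expanding $n/2$-regular graph must be close to bipartite, and we must rule out the two-cliques configuration. I would handle this through the standard argument: a set $S$ with $|N(S)|$ small and $|S|\approx n/2$ forces $e(S,\bar S)$ small or $S$ nearly independent. Regularity then forces $e(S,\bar S)=|S|n/2-2e(S)$. If $S$ spans many edges, counting degrees in $\bar S$ gives a contradiction unless $|\bar S|\ge n/2+1$; the small-error version of this is where care is needed.
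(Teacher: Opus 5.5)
There is a genuine gap: your dichotomy omits a case that actually occurs. The claim that an $n/2$-regular graph cannot be close to $2K_{n/2}$ is false. Take two disjoint copies of $K_{n/2}$ and join them by a perfect matching. The result is $n/2$-regular and has only $n/2$ edges between the halves. It is far from $K_{n/2,n/2}$. It is also not a robust outexpander: if $S$ is one of the cliques, every vertex outside $S$ has exactly one neighbour in $S$, so the robust neighbourhood of $S$ is $S$ itself. The correct stability statement for $D$-regular graphs with $D\ge n/2$, as used in the work on Hamilton decompositions of dense regular graphs, is a trichotomy: robust expander, close to $K_{n/2,n/2}$, or close to $2K_{n/2}$.

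Your sketch under ``Main obstacle'' fails at exactly the small-error step. The degree count gives a contradiction only when $e(S,\bar S)=0$, since then both $S$ and $\bar S$ need at least $n/2+1$ vertices. With $e(S,\bar S)=o(n^2)$, or $e(S,\bar S)=n/2$ as in the example, there is no contradiction. Neither your expander argument nor your bipartite counting applies to this configuration, so the proof as written does not cover all $n/2$-regular graphs.

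The missing case is the one place where exact regularity must be used, and the paper handles it as follows. Lemma~\ref{lem:boundary_dichotomy} leaves only two reduced-graph cases.
\begin{itemize}
\item A connected matching covering almost all clusters. Via super-regular pairs and the blow-up lemma (Proposition~\ref{prop:case_a}), this handles your expander and bipartite cases at once, with no robust-expander machinery.
\item Two components of size about $m/2$. This is your missing near-$2K_{n/2}$ case.
\end{itemize}
In the second case, since $2(d+1)>n$, Lemma~\ref{lem:compensating} gives a large matching between the two blocks. For $k=2$ even a trivial argument gives two disjoint cross edges: if at most one vertex covered all cross edges, the two blocks would together need more than $n$ vertices. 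Extend two disjoint cross edges into the near-complete cores $A_1,A_2$. Each core has $2^{|A_i|-O(1)}$ subsets carrying a Hamiltonian path between prescribed endpoints (Lemma~\ref{lem:near_complete_ham}). Gluing these gives $2^{|A_1|+|A_2|-O(1)}=2^{(1-o(1))n}$ cyclic subsets (Proposition~\ref{prop:case_c} with $k=2$).

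Adding such a third case would repair your proof. Your bipartite case is fine as it stands; the Moon--Moser condition even holds deterministically once $|A'|=|B'|\gg\sqrt\delta\, n$. Your expander case is acceptable if you cite a lemma that random vertex subsets of a robust outexpander remain robust outexpanders.
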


Together, Theorem~\ref{thm:main} and Proposition~\ref{prop:dirac_rate} describe the exponential rate below the Dirac threshold and at the boundary: below the threshold the rate follows a staircase, while at the boundary it reaches \(2^{(1-o(1))n}\); see Figure~\ref{fig:staircase}. The regularity assumption is essential for this staircase phenomenon.  Under a minimum-degree condition alone, the components need not be balanced, and the extremal obstruction is no longer governed by \(q\) equal pieces.  For example, in the range        $q(d+1)\le n<(q+1)(d+1),$
the graph
$        (q-1)K_{d+1}\cup K_{\,n-(q-1)(d+1)}$
has minimum degree \(d\), but its cyclic subsets are dominated by the last clique, whose order varies with \(d\) even while \(q\) remains fixed.

\begin{figure}[htbp]
\centering
\begin{tikzpicture}[scale=0.9]

\draw[->, thick] (-0.2, 0) -- (6.5, 0) node[right] {$d$};
\draw[->, thick] (0, -0.2) -- (0, 4.8) node[above, yshift=0.15cm] {$n/{\log_2 \operatorname{Cyc}(G)}$};

\draw (1.5, 0.05) -- (1.5, -0.05) node[below] {$\frac{n}{4}$};
\draw (2, 0.05) -- (2, -0.05) node[below] {$\frac{n}{3}$};
\draw (3, 0.05) -- (3, -0.05) node[below] {$\frac{n}{2}$};
\draw (6, 0.05) -- (6, -0.05) node[below] {$n$};

\foreach \y in {1, 2, 3, 4} {
    \draw (0.05, \y) -- (-0.05, \y) node[left] {$\y$};
}

\draw[thick, red] (1.2, 4) -- (1.45, 4);
\filldraw[red] (1.2, 4) circle (1.5pt);
\draw[red, fill=white, thick] (1.5, 4) circle (1.5pt);

\draw[thick, red] (1.5, 3) -- (1.95, 3);
\filldraw[red] (1.5, 3) circle (1.5pt);
\draw[red, fill=white, thick] (2, 3) circle (1.5pt);

\draw[thick, red] (2, 2) -- (2.95, 2);
\filldraw[red] (2, 2) circle (1.5pt);
\draw[red, fill=white, thick] (3, 2) circle (1.5pt);

\draw[thick, red] (3, 1) -- (6, 1);
\filldraw[red] (3, 1) circle (1.5pt);
\filldraw[red] (6, 1) circle (1.5pt);

\draw[dashed, gray!60] (1.5, 0) -- (1.5, 4);
\draw[dashed, gray!60] (2, 0) -- (2, 3);
\draw[dashed, gray!60] (3, 0) -- (3, 2);

\draw[dashed, gray!60] (0, 1) -- (3, 1);
\draw[dashed, gray!60] (0, 2) -- (3, 2);
\draw[dashed, gray!60] (0, 3) -- (2, 3);
\draw[dashed, gray!60] (0, 4) -- (1.5, 4);

\end{tikzpicture}
\caption{\footnotesize The staircase behaviour of the exponential lower bound. As the regular degree $d$ drops below the thresholds $n/k$ (moving from right to left), the value of $n/{\log_2 \operatorname{Cyc}(G)}$ jumps discretely to the next integer.}
\label{fig:staircase}
\end{figure}

The proof of Theorem~\ref{thm:main} is not a direct extension of the argument of Dragani\'{c}, Keevash and M\"{u}yesser.  Above the Dirac threshold, regularity strongly restricts the global structure.  Below it, several macroscopic pieces may coexist, and the extremal obstruction is no longer a single Dirac-type configuration but a whole staircase of possible decompositions.  The main difficulty is therefore to distinguish the genuinely extremal \(q\)-block situation from configurations that only appear extremal at the reduced-graph level.

We do this through a trichotomy for the reduced graph.  Either there is a large connected matching, which can be converted through regular pairs into exponentially many cyclic subsets; or there are \(q\) balanced dense components, where one must count almost all subsets inside each component and preserve the sharp leading coefficient \(q\); or there are \(q+1\) near-critical components, where the regularity of \(G\) forces compensating edges between two blocks and yields an exponential surplus over the target bound.  The proof overview in Section~\ref{sec:overview} explains these three mechanisms in more detail.

The rest of the paper is organized as follows.  Section~\ref{sec:overview} gives a detailed overview of the proof.  Section~\ref{sec:dichotomy} proves the reduced-graph trichotomy.  Sections~\ref{sec:non_degenerate}, \ref{sec:balanced_dense}, and~\ref{sec:near_critical} treat the three alternatives: connected matchings, \(q\) balanced dense components, and \(q+1\) near-critical components.  Section~\ref{sec:proof} combines these ingredients to prove Theorem~\ref{thm:main}.  Section~\ref{sec:q1_sharpness} proves the boundary result, Proposition~\ref{prop:dirac_rate}, and Section~\ref{sec:conclusion} discusses several further directions.

\section{Proof overview}\label{sec:overview}

To prove Theorem~\ref{thm:main}, we first use regularity and the global regular degree condition to reduce the graph to one of a few possible large-scale shapes. Then in each shape we use a different counting mechanism to produce enough cyclic subsets. The staircase behaviour comes from the fact that, below the Dirac threshold, the reduced graph can split into $q$ pieces, and the extremal contribution is then obtained by counting subsets inside pieces of order about $n/q$.

\subsection*{The reduced trichotomy}

Apply the degree form of Szemer\'edi's Regularity Lemma to $G$, and let $R$ be the reduced graph. 
The regularity lemma transfers the regular degree condition of $G$ into the lower bound
$
\delta(R)\ge \left(\frac{1}{q+1}-o(1)\right)|R|.
$
This minimum degree condition severely restricts the component structure of $R$. A connected matching in $R$ is a matching contained in a single connected component of $R$.
Lemma~\ref{lem:trichotomy} shows that one of three alternatives must occur:
\begin{itemize}
\item[(a)] $R$ contains a connected matching covering  $(1/q+\Omega(1))|R|$ vertices;
\item[(b)] $R$ has exactly $q$ connected components, all of size about $|R|/q$;
\item[(c)] $R$ has exactly $q+1$ connected components, all of size about $|R|/(q+1)$.
\end{itemize}
The proof of this trichotomy is based on a longest-path argument. Let $C_1,\ldots,C_s$ be the components of $R$. The minimum-degree condition gives a lower bound roughly
$
|C_i|\ge |R|/({q+1})
$
for every component $C_i$, and hence $s\le q+1$. If $R$ contains a long path, then taking alternating edges on this path gives a large connected matching, which is Case (a). Otherwise, the longest path is short, we get an upper bound roughly
$
|C_i|\le |R|/{q}
$
for every component, so $s\ge q$. Thus only $s=q$ and $s=q+1$ remain, and a simple size calculation yields the balanced alternatives in Cases (b) and (c), respectively.

\subsection*{Case (a): a large connected matching in $R$}
Let
$
P_1Q_1,\ldots,P_sQ_s
$
be the connected matching in $R$. Each matching edge corresponds to a regular pair in the original graph. The connectedness of the matching allows us to link these pairs cyclically by short connecting paths $L_1,\ldots,L_s$, where $L_i$ connects $v_i\in Q_i$ to $u_{i+1}\in P_{i+1}$ for each $i\in [s]$, with indices taken modulo $s$.

Inside each super-regular pair $(P_i,Q_i)$, we prove an abundant Hamilton-path statement: for fixed endpoints $u_i\in P_i$ and $v_i\in Q_i$, at least $2^{|P_i|+|Q_i|-o(n)}$ balanced subpairs $(P_i',Q_i')$ support a Hamiltonian path from $u_i$ to $v_i$. Choosing such subpairs independently over all matching edges and concatenating the resulting Hamiltonian paths with the  connecting paths $L_1,...,L_s$ gives at least
  $$
\prod_{i\in[s]}2^{|P_i|+|Q_i|-o(n)}
=2^{\sum_{i\in[s]}(|P_i|+|Q_i|)-o(n)}
\ge 2^{2sn/|R|-o(n)}
\ge 2^{n/q+\Omega(n)}.
$$ 

\subsection*{Case (b): $q$ components of size about $|R|/q$}
This is precisely the extremal configuration for the staircase bound. Since each component has order about $n/q$, obtaining the lower bound $(q-o(1))2^{n/q}$, and in particular the sharp leading coefficient $q$, requires us to count cyclic subsets that lie almost entirely inside the individual dense components.

After passing back to  $G$, each component of $R$ gives rise to one vertex block. More precisely, if $C_i$ is a connected component of $R$, then $A_i$ denotes the union of the clusters of $G$ corresponding to the vertices of $C_i$. The remaining vertices come from the exceptional set in the regularity partition; we distribute these exceptional vertices among the blocks and denote by $Z_i$ the set assigned to $A_i$. Without incorporating the exceptional vertices into the blocks, this argument would only yield $2^{n/q-o(n)}$, rather than the sharp leading term $(q-o(1))2^{n/q}$. Thus each block has the form
$
B_i=A_i\cup Z_i.
$
The set $Z_i$ is small, while the minimum-degree condition inherited from $G$ ensures that $G[A_i]$ has minimum degree strictly larger than $|A_i|/2$, and that every vertex in $Z_i$ still has many neighbours in $A_i$.

A single-block counting lemma shows that almost every subset of such a block is cyclic. Indeed, if $S\subseteq B_i$ is chosen uniformly at random, then $S\cap A_i$ inherits a Dirac-type degree condition, while $S\cap Z_i$ remains small and every vertex there has many neighbours in $S\cap A_i$. Chv\'{a}tal's theorem then gives a Hamiltonian cycle in $G[S]$ with probability $1-o(1)$.

Thus each block contributes
$
(1-o(1))2^{|B_i|}
$
cyclic subsets. Since the blocks are disjoint, these contributions are distinct apart from the empty set. Hence
$$
\operatorname{Cyc}(G)
\ge
(1-o(1))\sum_{i=1}^q2^{|B_i|}
\ge
(q-o(1))2^{n/q}.
$$
This is the only case in which the sharp leading coefficient $q$ has to be tracked carefully.

\subsection*{Case (c): $q+1$ components of size about $|R|/(q+1)$}
At first sight this seems smaller than the desired scale $2^{n/q}$. However, the regularity of $G$ prevents these blocks from being completely isolated in the original graph. As in Case (b), let $B_1,\ldots,B_{q+1}$ be the vertex blocks obtained from the components of $R$ together with the exceptional vertices. A global degree-counting argument shows that, for some distinct $i,j\in[q+1]$, the bipartite graph induced by the edges between $B_i$ and $B_j$ contains a large matching. From this point on, we focus only on these two blocks.

Two disjoint cross-block edges from this large matching are enough. If necessary, we extend them to two short paths whose endpoints lie in the dense cores $A_i\subseteq B_i$ and $A_j\subseteq B_j$. Since each core has minimum degree greater than half its order, there are $2^{|A_i|-O(1)}$ and $2^{|A_j|-O(1)}$ choices for Hamiltonian paths between the prescribed endpoint pairs inside $A_i$ and $A_j$, respectively. Combining these two internal Hamiltonian paths with the two fixed connecting paths gives a cycle. Hence
$$
\operatorname{Cyc}(G)\ge 2^{|A_i|+|A_j|-O(1)}
=2^{(2/(q+1)-o(1))n}.
$$
Since $q\ge2$, we have $2/(q+1)>1/q$, and therefore this gives an exponential surplus.

\medskip

\noindent\textbf{Notations}
The expression $0<a\ll b\ll c<1$ means that the constants are chosen from right to left. For instance, one first fixes a constant $c<1$, then chooses $b$ sufficiently small as a function of $c$, and finally chooses $a>0$ sufficiently small as a function of $b$.
We write $x = y \pm z$ to denote the interval $y - z \le x \le y + z$. For two disjoint sets $A, B \subseteq V(G)$, we write $e_G(A,B)$ for the number of edges between them, and define the edge density as $d_G(A,B) = \frac{e_G(A,B)}{|A||B|}$. The subscript is omitted when the graph is clear.

\section{Reduced Graph Trichotomy}\label{sec:dichotomy}

The core objective of this section is to establish a crucial structural trichotomy lemma. 
\begin{lemma}[Reduced graph trichotomy]\label{lem:trichotomy}
Let $q \ge 2$ be an integer and  suppose $1/m\ll \eta\ll\tau\ll 1/q$.  Let $R$ be an $m$-vertex  graph with
 $\delta(R) \ge \left(\frac{1}{q+1} - \eta\right)m.$
Then at least one of the following holds:
\begin{enumerate}
    \item[(a)] $R$ contains a connected matching covering at least $(1/q + \tau)m$ vertices;
    \item[(b)] $R$ has exactly $q$ connected components $Y_1, \dots, Y_q$, each of size $\left(\frac{1}{q} \pm 2q\tau\right)m;$
    \item[(c)] $R$ has exactly $q+1$ connected components $X_1, \dots, X_{q+1}$, each of size $\left(\frac{1}{q+1} \pm \tau\right)m.$
\end{enumerate}
  \end{lemma}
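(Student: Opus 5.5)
We argue by contradiction: assume (a) fails, and show that then (b) or (c) must hold. Let $C_1,\dots,C_s$ be the components of $R$. Every vertex has all its neighbours inside its own component, so $|C_i|\ge \delta(R)+1 > (\frac{1}{q+1}-\eta)m$. Summing over components gives $s(\frac1{q+1}-\eta)<1$, and since $\eta\ll 1/q$ this forces $s\le q+1$.

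The main step is an upper bound on component sizes when (a) fails. Fix a component $C$ and take a longest path $P=v_0v_1\cdots v_\ell$ in $R[C]$. Taking every other edge of $P$ gives a connected matching covering at least $\ell$ vertices. Since (a) fails, $\ell<(1/q+\tau)m$. Now use the standard longest-path closure. By maximality, all neighbours of $v_0$ and of $v_\ell$ lie on $P$, and each endpoint has degree at least $(\frac1{q+1}-\eta)m$. If some $i$ has $v_0v_{i+1}\in E$ and $v_iv_\ell\in E$, we obtain a cycle on $V(P)$. Connectivity of $C$ then lets us extend this cycle to a longer path unless $V(P)=C$. If no such $i$ exists, the pigeonhole count $\ell\ge d(v_0)+d(v_\ell)$ gives $\ell\ge (\frac{2}{q+1}-2\eta)m$. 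Because $q\ge2$, we have $\frac{2}{q+1}\ge \frac1q+\frac{1}{q(q+1)}$, and since $\tau\ll1/q$ this contradicts $\ell<(1/q+\tau)m$. Hence the cycle case occurs and $C=V(P)$, so $|C|\le \ell+1\le(\frac1q+\tau)m+1$.

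I expect this to be the main obstacle: making the Pósa/Dirac-style closure rigorous and handling degenerate small cases. The clean route is to invoke the classical facts that a connected graph with minimum degree $k$ and at least $2k$ vertices has a path on at least $2k$ vertices, or else is covered by a longest path. This gives $|C|\le (1/q+\tau)m+1$ for every component, hence $s\ge m/((1/q+\tau)m+1)$. With $\tau\ll1/q$ and $1/m\ll\eta$, this quantity exceeds $q-1$, so $s\ge q$.

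Finally, compare sizes in the two remaining cases. If $s=q$, every component has size at most $(\frac1q+\tau)m+1$, and the sizes sum to $m$. So each component has size at least $m-(q-1)((\frac1q+\tau)m+1)\ge(\frac1q-q\tau)m-q$, which lies within the window $(\frac1q\pm2q\tau)m$, and (b) holds. If $s=q+1$, each component has size at least $(\frac1{q+1}-\eta)m$, so each also has size at most $m-q(\frac1{q+1}-\eta)m=(\frac1{q+1}+q\eta)m\le(\frac1{q+1}+\tau)m$, using $\eta\ll\tau$. This gives (c).
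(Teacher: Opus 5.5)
Your proof is correct and follows essentially the same route as the paper: $|C_i|\ge\delta(R)+1$ gives $s\le q+1$, Dirac's longest-path lemma (which you reprove inline via the closure argument) combined with alternating path edges turns a large component into a large connected matching, and size counting handles $s=q$ and $s=q+1$. The only difference is organisational: you argue contrapositively, deriving the uniform bound $|C|\le(1/q+\tau)m+1$ for every component from the failure of (a) and hence $s\ge q$, whereas the paper treats $s\le q-1$ and $s=q$ separately using the threshold $(1/q+2\tau)m$.
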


The following lemma of Dirac relates the minimum degree to the length of a longest path.

 \begin{lemma}[Dirac]\label{lem:long_path}
Every connected graph $H$ has a path of order at least $\min\{|H|,2\delta(H)+1\}$.
\end{lemma}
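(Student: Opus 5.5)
The plan is to run the classical longest-path rotation argument. We take a longest path $P=v_0v_1\cdots v_k$ in $H$, so that $P$ has order $k+1$. We may assume $k+1\le 2\delta(H)$ and $k+1<|H|$, since otherwise we are done, and we aim for a contradiction. The case $\delta(H)=0$ is trivial: connectivity then forces $|H|=1$, and the single vertex is a path of order $1$. So assume $\delta:=\delta(H)\ge 1$.

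First, maximality of $P$ gives that every neighbour of $v_0$ and every neighbour of $v_k$ lies on $P$; otherwise we could extend $P$. Hence $v_0$ has at least $\delta$ neighbours among $v_1,\dots,v_k$, and $v_k$ has at least $\delta$ neighbours among $v_0,\dots,v_{k-1}$. Define
\[
I=\{i\in\{0,\dots,k-1\}: v_0v_{i+1}\in E(H)\}\quad\text{and}\quad J=\{i\in\{0,\dots,k-1\}: v_iv_k\in E(H)\}.
\]
Both are subsets of a $k$-element set and each has size at least $\delta$. Since $k\le 2\delta-1<|I|+|J|$, the two sets intersect, so some index $i$ satisfies $v_0v_{i+1}\in E(H)$ and $v_iv_k\in E(H)$. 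Then
\[
v_0v_1\cdots v_i\,v_kv_{k-1}\cdots v_{i+1}v_0
\]
is a cycle $C$ on exactly the vertex set $V(P)$. This remains valid in the degenerate cases $i=0$ or $i=k-1$, where the cycle simply uses the edge $v_0v_k$; note also that $k\ge 1$ because $\delta\ge 1$.

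Next we use connectivity. Since $|V(C)|=k+1<|H|$ and $H$ is connected, some vertex $w\notin V(C)$ is adjacent to some vertex $v_j$ of $C$. We open $C$ at $v_j$: deleting one of the two edges of $C$ at $v_j$ gives a Hamiltonian path of $C$ ending at $v_j$, and prepending $w$ yields a path of order $k+2$. This contradicts the maximality of $P$, so $P$ has order at least $\min\{|H|,2\delta+1\}$.

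The only step requiring care is the pigeonhole count producing the index $i$. One must set up the shifted index sets so that both live in the same $k$-element set $\{0,\dots,k-1\}$, and then check that the assumption $k+1\le 2\delta$ is exactly what forces $|I|+|J|>k$. Everything else is routine.
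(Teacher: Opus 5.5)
The paper gives no proof of this lemma; it cites it as a classical result of Dirac and uses it only in the reduced-graph trichotomy. Your argument is the standard proof and it is correct: take a longest path $P$, use the shifted-neighbourhood pigeonhole to close $V(P)$ into a cycle, then use connectivity to extend and contradict maximality.

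The one small point is the case $k=1$, where $v_0v_1v_0$ is not a genuine cycle. This case cannot occur under your assumptions. Maximality forces $N(v_0)=\{v_1\}$ and $N(v_1)=\{v_0\}$, so connectivity gives $H=K_2$, contradicting $k+1<|H|$. In any event, the later step only needs a spanning path of $V(P)$ ending at a prescribed vertex, which is trivial when $k=1$.
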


\begin{proof}[Proof of Lemma \ref{lem:trichotomy}]
Let the components of $R$ be $C_1,\ldots,C_s$. Then
$|C_j| \ge\delta(R) + 1\ge \left(\frac{1}{q+1} - 2\eta\right)m$  for any $1\le j\le s$,   which immediately implies $s \le q+1$ since $\eta \ll 1/q$.

Suppose first that $s \le q-1$. Let $C$ be a largest component. Then $|C| \ge m/(q-1)$. By Lemma \ref{lem:long_path}, $C$ contains a path on at least
$$ \min\left\{\frac{m}{q-1}, \left(\frac{2}{q+1} - 3\eta\right)m\right\} \ge \left(\frac{1}{q} + 2\tau\right)m $$
vertices, since $\eta\ll \tau\ll 1/q$. Taking alternating edges on this path gives a connected matching covering  all but at most one vertex of the path, and hence  at least $(1/q + \tau)m$ vertices  for $m$ sufficiently large. Thus (a) holds.

Now suppose that $s = q$. If some component has size at least $(1/q + 2\tau)m$, the same long path argument guarantees a connected matching covering at least $(1/q + \tau)m$ vertices. Hence we may assume that every component  has size at most $(1/q + 2\tau)m$. Since the $q$ component sizes sum exactly to $m$, every component then has size at least
$m - (q-1)\left(\frac{1}{q} + 2\tau\right)m = \left(\frac{1}{q} - 2(q-1)\tau\right)m. $
Thus (b) holds.

Finally, suppose that $s = q+1$. Since $\sum_{j\in[q+1]} |C_j| = m,$
the uniform lower bound on the components forces an upper bound on each $|C_i|$:
$|C_i| \le m - q\left(\frac{1}{q+1} - 2\eta\right)m = \left(\frac{1}{q+1} + 2q\eta\right)m.$
Combining this with the lower bound, and also the fact that $\eta\ll \tau$, yields 
$|C_i| = \left(\frac{1}{q+1} \pm \tau\right)m$ for all $i$, thus establishing (c).
\end{proof}

\section{The Non-Degenerate Case: Connected Matchings}\label{sec:non_degenerate}

We first record a few standard tools on regular pairs and then use them to convert a connected matching in the reduced graph into many cyclic subsets. For $\rho < 1$, a pair of disjoint vertex sets $(A, B)$ is \emph{$\rho$-regular} if every $A' \subseteq A$ and $B' \subseteq B$ with $|A'| \ge \rho|A|$ and $|B'| \ge \rho|B|$ satisfies
$$ |d(A', B') - d(A, B)| \le \rho. $$
Such a pair is called \emph{$(\rho,\mu)$-super-regular} if, in addition, every vertex $a\in A$ satisfies $d(a,B)\ge \mu |B|$ and every vertex $b\in B$ satisfies $d(b,A)\ge \mu |A|$. Observe that this condition already implies $d(A,B)\ge \mu$.

The following proposition is the main target in this section.

\begin{proposition}\label{prop:case_a}
Let $1/L\ll \rho,1/m<1$ and $\rho\ll \mu,\zeta$. Suppose $G$ contains $m$ pairwise disjoint clusters $V_1,\dots,V_m$ of size $L$, and let $R$ be a graph on vertex set $[m]$. Assume that for every edge $ij\in E(R)$, the pair $(V_i,V_j)$ is $\rho$-regular in $G$ with density at least $\mu$. If $R$ admits a connected matching covering $t\ge4$ vertices, then
$
\operatorname{Cyc}(G)\ge 2^{tL-\zeta mL}.
$
\end{proposition}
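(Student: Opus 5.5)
Let $P_1Q_1,\dots,P_sQ_s$ (with $2s=t$, $s\ge2$) be the connected matching in $R$, all contained in one component of $R$. The plan follows the overview for Case (a).

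\textbf{Step 1: super-regular cores.} For each $i$, delete from $V_{P_i}$ and $V_{Q_i}$ the at most $\rho L$ vertices of low degree into the partner cluster, and trim to equal size. This gives a $(2\rho,\mu/2)$-super-regular pair $(P_i^*,Q_i^*)$ with $|P_i^*|=|Q_i^*|\ge(1-\rho)L$. The total loss is $O(\rho t L)\le \zeta mL/4$.

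\textbf{Step 2: connecting paths.} Since the matching edges lie in a common component of $R$, for each $i$ (indices mod $s$) there is a walk in $R$ from $Q_i$ to $P_{i+1}$. We may take this walk to have length at most $m$. Use regularity along the walk and greedy embedding, always choosing vertices of typical degree, to find vertex-disjoint paths $L_i$ in $G$ from some $v_i\in Q_i^*$ to some $u_{i+1}\in P_{i+1}^*$. Each $L_i$ has $O(m)$ vertices, so the paths use $O(m^2)$ vertices in total. Since $1/L\ll 1/m$, this is negligible. Remove the interior vertices of all $L_i$ from the cores, except the endpoints $u_i,v_i$. Regularity survives because only $O(m^2)\ll \rho L$ vertices are removed from each cluster.

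\textbf{Step 3: many Hamilton paths in a super-regular pair (main obstacle).} The key lemma is the following. For a $(\rho',\mu')$-super-regular pair $(P,Q)$ of size about $L$ and fixed $u\in P$, $v\in Q$, at least $2^{|P|+|Q|-\zeta L/2}$ subsets $S$ with $u,v\in S$ are such that $G[S]$ contains a Hamilton $u$–$v$ path.

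I would prove this by random sampling. Pick $P'\subseteq P$ and $Q'\subseteq Q$ uniformly at random and condition on $\bigl||P'|-|Q'|\bigr|\le 1$ with the right parity. This keeps a $2^{-O(\log L)}$ fraction of all subsets, which is absorbed into $\zeta L$. Then argue that with probability $1-o(1)$ the pair $(P'\cup\{u\},Q'\cup\{v\})$ is still super-regular with slightly worse parameters, say $(\sqrt{\rho'},\mu'/3)$:
\begin{itemize}
\item vertex degrees are preserved by Chernoff bounds and a union bound;
\item regularity is inherited by random subsets of linear size, by standard regularity inheritance.
\end{itemize}
We then need a Hamilton $u$–$v$ path in a balanced super-regular pair. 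This follows from the Blow-up Lemma, or more elementarily from a Chvátal-type argument: super-regular balanced bipartite graphs are Hamilton-connected between opposite sides once $\rho\ll\mu$. If the blow-up route is awkward, I would instead use a simple absorbing argument with the Moon–Moser bipartite degree condition after removing few bad vertices. Bad vertices can also be handled by allowing the subset to exclude them, which costs only $2^{o(L)}$.

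\textbf{Step 4: assembling.} For each $i$, choose a subset $S_i$ of $P_i^*\cup Q_i^*$ supporting a Hamilton $u_i$–$v_i$ path, independently over $i$. Together with the fixed paths $L_i$, these form a Hamilton cycle on
$$S=\bigcup_i S_i\cup\bigcup_i V(L_i).$$
The map $(S_1,\dots,S_s)\mapsto S$ is injective, because the $S_i$ lie in disjoint parts and $\bigcup_i V(L_i)$ is fixed. Hence
$$\operatorname{Cyc}(G)\ge\prod_i 2^{2|P_i^*|-\zeta L/2}\ge 2^{tL-\zeta mL}.$$
This uses $t\le m$ and the loss bounds from Steps 1–3.

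The condition $t\ge4$ guarantees $s\ge2$, so the cycle structure is nondegenerate. The case $s=1$ is actually also fine, with $L_1$ closing a path back into the same pair.
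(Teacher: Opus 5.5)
Your proposal is correct and follows essentially the same route as the paper: super-regular cores, connecting paths embedded greedily along shortest paths in $R$ with their interior vertices then removed from the cores, random balanced subpairs that remain super-regular so that the Blow-up Lemma yields a Hamilton $u_i$--$v_i$ path, and an injective concatenation. The paper's differences are cosmetic: it samples subsets of size exactly $\lfloor N/2\rfloor$ and embeds the connecting paths via a backward reachable-set argument. Do keep the Blow-up Lemma for Step 3, though. Your suggested Chv\'atal or Moon--Moser fallback does not work, because a super-regular pair only guarantees degrees of about $\mu|Q|$, far below the half-degree threshold those conditions need.
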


The proof has three steps: we first clean the regular pairs into super-regular cores, then connect the matching edges by short paths, and finally use a probabilistic version of the Blow-up Lemma to obtain exponentially many internal Hamiltonian paths.

\begin{lemma}\label{lem:slicing}
Let $0<\rho\ll \mu< 1$, and let $(A,B)$ be a $\rho$-regular pair of density at least $\mu$.
\begin{enumerate}
\item[(i)] For every $B'\subseteq B$ with $|B'|\ge \rho |B|$, all but at most $\rho |A|$ vertices $a\in A$ satisfy
$|N(a)\cap B'|\ge (\mu-\rho)|B'|.$

\item[(ii)] If $A'\subseteq A$ and $B'\subseteq B$ satisfy $|A'|\ge \alpha |A|$ and $|B'|\ge \alpha |B|$ for some $\alpha>\rho$, then $(A',B')$ is $(2\rho/\alpha)$-regular and has density at least $\mu-\rho$.

\item[(iii)] If $|A|=|B|=L$, then there exist $A_0\subseteq A$ and $B_0\subseteq B$ such that
$|A_0|=|B_0|\ge (1-\rho)L$
and $(A_0,B_0)$ is $(4\rho,\mu/2)$-super-regular.

\end{enumerate}
\end{lemma}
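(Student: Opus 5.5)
The three parts are standard consequences of the definition of $\rho$-regularity, and I will prove them in order, each using the previous one.

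\emph{Part (i).} Fix $B'\subseteq B$ with $|B'|\ge\rho|B|$. Let $A^-$ be the set of vertices $a\in A$ with $|N(a)\cap B'|<(\mu-\rho)|B'|$, and suppose for contradiction that $|A^-|>\rho|A|$. Then $(A^-,B')$ is a pair of admissible sizes, so $d(A^-,B')\ge d(A,B)-\rho\ge\mu-\rho$. On the other hand, averaging the degrees of vertices of $A^-$ into $B'$ gives $d(A^-,B')<\mu-\rho$, a contradiction. (The strict inequality in the definition of $A^-$ gives strictness; if one prefers, apply it with threshold $\mu-\rho$ directly.)

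\emph{Part (ii).} Let $A''\subseteq A'$ and $B''\subseteq B'$ with $|A''|\ge(2\rho/\alpha)|A'|$ and $|B''|\ge(2\rho/\alpha)|B'|$. Then $|A''|\ge 2\rho|A|\ge\rho|A|$, and similarly for $B''$, so regularity of $(A,B)$ gives $|d(A'',B'')-d(A,B)|\le\rho$. Applying the same to $(A',B')$, which is admissible since $\alpha>\rho$, gives $|d(A',B')-d(A,B)|\le\rho$. Hence $|d(A'',B'')-d(A',B')|\le 2\rho\le 2\rho/\alpha$, so $(A',B')$ is $(2\rho/\alpha)$-regular. Moreover $d(A',B')\ge d(A,B)-\rho\ge\mu-\rho$.

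\emph{Part (iii).} Let $A^\ast$ be the set of vertices of $A$ with fewer than $(\mu-\rho)L$ neighbours in $B$. By (i) with $B'=B$, $|A^\ast|\le\rho L$, and define $B^\ast$ symmetrically. Take $A_0\subseteq A\setminus A^\ast$ and $B_0\subseteq B\setminus B^\ast$ of common size $\lceil(1-\rho)L\rceil$. For $a\in A_0$,
\[
|N(a)\cap B_0|\ge(\mu-\rho)L-\rho L\ge(\mu-2\rho)L\ge\tfrac{\mu}{2}|B_0|,
\]
using $\rho\ll\mu$, and symmetrically for $b\in B_0$. For regularity, (ii) with $\alpha=1-\rho>\rho$ gives $(2\rho/(1-\rho))$-regularity, which is at most $4\rho$ because $\rho\le1/2$. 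Hence $(A_0,B_0)$ is $(4\rho,\mu/2)$-super-regular.

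The only point needing care is the rounding of $(1-\rho)L$ to an integer and checking that the constants close. Both are routine given $\rho\ll\mu$; no step is a real obstacle.
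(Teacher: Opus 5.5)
Your proposal is correct and takes essentially the same route as the paper. You fill in the standard averaging argument for (i) and the two-step density comparison for (ii), which the paper simply calls ``standard consequences of regularity''. For (iii), like the paper, you use (i) to delete the at most $\rho L$ low-degree vertices on each side, balance the two sides, bound the resulting degree loss by $\rho L$, and obtain $4\rho$-regularity from (ii) with $\alpha=1-\rho$.
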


\begin{proof}
Parts (i) and (ii) are standard consequences of regularity. For (iii), apply (i) to $B'=B$ and symmetrically to $A'=A$. After deleting at most $\rho L$ vertices from each side and then balancing the two sides, we obtain subsets $A_0\subseteq A$ and $B_0\subseteq B$ with $|A_0|=|B_0|\ge(1-\rho)L$. The inherited regularity follows from (ii), and every remaining vertex loses at most $\rho L$ neighbours when passing to the cleaned opposite side. Since $\rho\ll\mu$, the resulting pair is $(4\rho,\mu/2)$-super-regular.
\end{proof}

Next, we establish a tool to connect these super-regular cores. The following lemma guarantees that we can embed short paths through the regular pairs, even while aggressively avoiding a small set of already utilized vertices.

\begin{lemma}\label{lem:path_embed}
Let $0<\rho\ll \mu<1$. Then, for all  integers $b,L\ge 1$ satisfying $1/L\ll \rho,1/b$, the following holds.
Let $Z_0,Z_1,\dots,Z_{\ell}$ be pairwise disjoint clusters of size $L$, where ${\ell}\ge 1$, such that each consecutive pair $(Z_{j-1},Z_j)$ is $\rho$-regular with density at least $\mu$. Let $F$ be a set of forbidden vertices with
$
|F\cap Z_j|\le b$ for all $0\le j\le \ell$. If $A\subseteq Z_0\setminus F$ and $B\subseteq Z_{\ell}\setminus F$ satisfy $|A|,|B|\ge  L/3$, then $G-F$ contains a path
$
z_0z_1\dots z_{\ell}
$
such that $z_0\in A$, $z_{\ell}\in B$, and $z_j\in Z_j$ for every $0\le j\le \ell$.
\end{lemma}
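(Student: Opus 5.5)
The approach is a greedy embedding driven by ``typical'' candidate sets, in the style of the standard counting lemma for regular pairs. For $j=0,\dots,\ell$, the path vertex $z_j$ is chosen so that the set of available candidates for $z_{j+1}$ stays large; the right endpoint is handled by also propagating backwards from $B$.

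\emph{Base cases.} If $\ell=1$, let $A'=A$ and $B'=B$. Both have size at least $L/3>\rho L$, so by regularity $d(A',B')\ge\mu-\rho>0$, and some edge $z_0z_1$ joins $A$ to $B$. This edge avoids $F$ because $A,B$ are disjoint from $F$.

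\emph{General case.} For $\ell\ge2$, define candidate sets from the left as follows. Set $S_0=A$. For $j\ge1$, let $S_j$ be the set of vertices of $Z_j\setminus F$ having a neighbour in $S_{j-1}$. Only the existence of a path is needed, not counting, so I propagate sizes. The aim is to show that $S_j$ is large, say $|S_j|\ge L/3$ as well. By Lemma~\ref{lem:slicing}(i) applied to the regular pair $(Z_{j},Z_{j-1})$ with $B'=S_{j-1}$ (of size at least $L/3\ge\rho L$), all but at most $\rho L$ vertices of $Z_j$ have at least $(\mu-\rho)|S_{j-1}|>0$ neighbours in $S_{j-1}$. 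Hence
\[
|S_j|\ge L-\rho L-b\ge L/3,
\]
using $1/L\ll\rho,1/b$ and $\rho$ small. Thus every $S_j$ with $1\le j\le \ell$ has size at least $L/3$, indeed almost all of $Z_j$.

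\emph{Closing the path.} Restrict the final step so that it lands in $B$. Consider $S_{\ell-1}$, which has size at least $L/3$, or $A$ itself when $\ell=1$. Applying regularity to $(S_{\ell-1},B)$, both of size at least $L/3\ge\rho L$, gives density at least $\mu-\rho>0$, so there is an edge $z_{\ell-1}z_\ell$ with $z_{\ell-1}\in S_{\ell-1}$ and $z_\ell\in B$. Now trace back: $z_{\ell-1}\in S_{\ell-1}$ has a neighbour $z_{\ell-2}\in S_{\ell-2}$, and so on, down to $z_0\in S_0=A$. All the $z_j$ lie in distinct clusters, which are pairwise disjoint, so they are distinct and the walk is a path. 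Every $z_j$ avoids $F$ by construction of the $S_j$ and since $A,B\subseteq$ complements of $F$.

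\emph{Main obstacle.} The only delicate point is keeping the candidate sets above the regularity threshold $\rho L$ at every step. That is exactly where the bounds $|F\cap Z_j|\le b$ and $1/L\ll 1/b$ enter: the forbidden vertices cost only $b=o(L)$ per cluster. Lemma~\ref{lem:slicing}(i) then shows that the exceptional sets are of size at most $\rho L$, so the sets never shrink below roughly $(1-2\rho)L$ after the first step.
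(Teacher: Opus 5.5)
Your proof is correct and is essentially the paper's argument run in the opposite direction. The paper propagates the reachable sets backwards from $S_\ell=B$ via Lemma~\ref{lem:slicing}(i) and closes at the $A$-end using $|Z_0\setminus S_0|\le\rho L+b<|A|$; you propagate forwards from $S_0=A$ and close at the $B$-end by applying regularity to the pair $(S_{\ell-1},B)$, which is equally valid (and could also be done by noting $|Z_\ell\setminus S_\ell|\le \rho L+b<|B|$, exactly mirroring the paper).
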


\begin{proof}
We define the sets backwards. Put $S_{\ell}=B$. Having defined $S_{j+1}$, let $S_j$ be the set of vertices in $Z_j\setminus F$ with at least one neighbour in $S_{j+1}$.

We claim that $|S_j|\ge  L/6$ for every $0\le j\le {\ell}$, and moreover $|S_j|\ge L-\rho L-b$ for every $0\le j<{\ell}$.
The first assertion is clear for $j={\ell}$, since $S_{\ell}=B$ and $|B|\ge  L/3$. Now suppose that $j<{\ell}$ and $|S_{j+1}|\ge L/6$. Lemma~\ref{lem:slicing}(i), applied to the pair $(Z_j,Z_{j+1})$ and the set $S_{j+1}$, implies that all but at most $\rho L$ vertices of $Z_j$ have a neighbour in $S_{j+1}$. Removing the forbidden vertices gives
$
|S_j|\ge L-\rho L-b\ge L/6.
$
This proves the claim.

Since $|Z_0\setminus S_0|\le \rho L+b< |A|$, we have $A\cap S_0\neq\varnothing$. Choose $z_0\in A\cap S_0$.
Now suppose that $z_0,\dots,z_j$ have been chosen with $z_j\in S_j$. By the definition of $S_j$, the vertex $z_j$ has a neighbour in $S_{j+1}$; choose one such neighbour as $z_{j+1}$. Since the clusters $Z_0,\dots,Z_{\ell}$ are pairwise disjoint, the vertices chosen in this way are distinct. Repeating this for $0\le j<{\ell}$ gives a path
$
z_0z_{1}\dots z_{\ell}
$
in $G-F$, with $z_0\in A$, $z_{\ell}\in B$, and $z_j\in Z_j$ for every $0\le j\le {\ell}$.
\end{proof}

Inside each super-regular matching edge, we now embed Hamiltonian paths using the following Blow-up Lemma by Koml\'{o}s, S\'{a}rk\"{o}zy and Szemer\'{e}di \cite{KSS97}.

\begin{lemma}[Koml\'{o}s, S\'{a}rk\"{o}zy and Szemer\'{e}di]\label{lem:blowup_ham} 
Let $1/N\ll\rho\ll\mu<1$.
If $(A, B)$ is $(\rho, \mu)$-super-regular and $|A| = |B| = N$, then for any $a \in A$ and $b \in B$, the graph $G[A, B]$ contains a Hamiltonian path from $a$ to $b$.
\end{lemma}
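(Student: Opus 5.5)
The statement is the classical Koml\'{o}s--S\'{a}rk\"{o}zy--Szemer\'{e}di result, and I would derive it from the Blow-up Lemma in its form \emph{with image restrictions}. That form says the following. Suppose $(A,B)$ is $(\rho,\mu)$-super-regular with $|A|=|B|=N$, and $H$ is a bipartite graph with parts of sizes $N,N$ and bounded maximum degree $\Delta$. Suppose further that at most $c N$ vertices of $H$ are each restricted to a candidate set of size at least $c' N$ on the appropriate side, with $\rho\ll c,c',1/\Delta,\mu$. Then $H$ embeds into $G[A,B]$ respecting the restrictions and the bipartition. The target here is a Hamiltonian path. This is a bipartite graph with $\Delta=2$, and since its order $2N$ is even, its two endpoints lie on opposite sides. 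This matches the requirement that the path starts in $A$ and ends in $B$.

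The only difficulty is that the endpoints are prescribed \emph{exactly}, i.e.\ their candidate sets are the singletons $\{a\}$ and $\{b\}$, which are too small for the Blow-up Lemma. I would remove this by peeling off the endpoints.

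\textbf{Step 1.} Set $A^-=A\setminus\{a\}$ and $B^-=B\setminus\{b\}$, both of size $N-1$. By Lemma~\ref{lem:slicing}(ii) with $\alpha$ close to $1$, the pair $(A^-,B^-)$ is $2\rho$-regular. Each vertex loses at most one neighbour, so every vertex still has degree at least $\mu N-1\ge (\mu/2)(N-1)$ across the pair. Hence $(A^-,B^-)$ is $(2\rho,\mu/2)$-super-regular.

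\textbf{Step 2.} Define candidate sets $X=N(a)\setminus\{b\}\subseteq B^-$ and $Y=N(b)\setminus\{a\}\subseteq A^-$. Both have size at least $\mu N-1\ge (\mu/2)(N-1)$.

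\textbf{Step 3.} Let $H$ be a path $h_1h_2\cdots h_{2N-2}$ with bipartition classes of sizes $N-1$ and $N-1$. Place $h_1$ on the $B^-$ side and $h_{2N-2}$ on the $A^-$ side. Restrict $h_1$ to $X$ and $h_{2N-2}$ to $Y$; these are only two restricted vertices, each with a linear-size candidate set. The Blow-up Lemma with image restrictions, applied with parameters $2\rho\ll\mu/2$ and $1/N$ small, embeds $H$ spanningly into $G[A^-,B^-]$. This gives a Hamiltonian path of $G[A^-,B^-]$ from some $x\in N(a)$ to some $y\in N(b)$.

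\textbf{Step 4.} Prepending $a$ and appending $b$ yields the path $a\,x\cdots y\,b$. It covers all of $A\cup B$ and alternates sides correctly, so it is a Hamiltonian path of $G[A,B]$ from $a$ to $b$.

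I expect the main obstacle to be checking the exact hypotheses of the image-restricted Blow-up Lemma. These are the permissible number of restricted vertices and the minimum candidate-set size relative to $\rho$. Both are satisfied with room to spare, since there are only two restricted vertices and their candidate sets have linear size $\Omega(\mu N)$.

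If one prefers to avoid image restrictions altogether, there is a fallback. First fix short paths $a\,x\,a'$ and $b\,y\,b'$ with $a'\in A$, $x\in B$, $y\in A$, $b'\in B$. Then apply the unrestricted Blow-up Lemma to a super-regular pair after deleting these vertices, keeping the two sides balanced. Finally, connect through typical vertices using Lemma~\ref{lem:slicing}(i) together with a standard absorbing-type end adjustment. This route is messier, so the restricted version is my first choice.
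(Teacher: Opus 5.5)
Your proposal is correct. The paper gives no proof of this lemma and simply cites the Koml\'{o}s--S\'{a}rk\"{o}zy--Szemer\'{e}di Blow-up Lemma, and your reduction is exactly the standard way this corollary is obtained from that citation: you delete $a$ and $b$, use the image-restricted form of the Blow-up Lemma to embed a spanning path of $G[A\setminus\{a\},B\setminus\{b\}]$ whose ends lie in $N(a)$ and $N(b)$, and then reattach $a$ and $b$. The only slip is cosmetic: Lemma~\ref{lem:slicing}(ii) gives $(2\rho N/(N-1))$-regularity rather than $2\rho$-regularity, which is harmless.
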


To establish an exponential lower bound, finding a single path is not enough. Instead, we use a probabilistic argument to prove that randomly selecting subsets within a super-regular pair yields exponentially many Hamiltonian paths.
\begin{lemma}\label{lem:abundant_ham}
Let $1/N\ll \rho\ll \mu,\zeta<1$. If $(X,Y)$ is a $(\rho,\mu)$-super-regular pair with $|X|=|Y|=N$, then for any $x\in X$ and $y\in Y$, there are at least
$
2^{2N-\zeta N}
$
pairs of sets $(A,B)$ with $x\in A\subseteq X$, $y\in B\subseteq Y$ such that $G[A,B]$ contains a Hamiltonian path from $x$ to $y$.
\end{lemma}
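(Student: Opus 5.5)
The plan is a random-subset argument. Choose $A$ and $B$ at random, show that with probability at least $1/2$ the pair $(A,B)$ is balanced and still super-regular, and then apply Lemma~\ref{lem:blowup_ham} to each good pair.

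\textbf{Sampling.} Let $A=\{x\}\cup A^*$ and $B=\{y\}\cup B^*$, where $A^*\subseteq X\setminus\{x\}$ and $B^*\subseteq Y\setminus\{y\}$ are independent uniformly random subsets. Each vertex is included with probability $1/2$. The number of pairs $(A,B)$ is $2^{2N-2}$.

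\textbf{The good event.} I first restrict to the event $|A|=|B|$, which Lemma~\ref{lem:blowup_ham} requires. Conditioned on $|A|=|B|$, the common size is sharply concentrated near $N/2$. The probability that $|A|=|B|$ is $\Theta(N^{-1/2})$. This only costs a factor $2^{-O(\log N)}$, which is absorbed by $2^{-\zeta N}$ since $1/N\ll\zeta$.

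To avoid working with the conditioned measure, I would count directly. Fix a size $k=N/2\pm\gamma N$ and let $(A,B)$ be a uniform pair of $k$-sets containing $x$ and $y$ respectively. The number of such pairs is $\binom{N-1}{k-1}^2\ge 2^{2N-\zeta N}$ for $k$ near $N/2$. So it suffices to show that a positive fraction of these pairs, say at least half, are good, with $k=\lfloor N/2\rfloor$.

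\textbf{Super-regularity of a random pair.} I will show that a uniform pair of $k$-sets is $(\rho',\mu/3)$-super-regular with high probability, for some $\rho'$ with $\rho\ll\rho'\ll\mu$.

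\emph{Degrees.} Each vertex $a\in X$ has at least $\mu N$ neighbours in $Y$. Its number of neighbours in $B$ is hypergeometric with mean at least about $\mu k$. By Chernoff's inequality for the hypergeometric distribution, $d(a,B)\ge(\mu/2)|B|$ fails with probability $e^{-\Omega(\mu^2 N)}$. A union bound over all $2N$ vertices handles the degree condition, and symmetrically for vertices of $Y$.

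\emph{Regularity.} This is the step I expect to be the main obstacle, since we need regularity of a random half. The direct route is Lemma~\ref{lem:slicing}(ii) with $\alpha=1/2-o(1)$. It shows that any subsets of sizes at least $\alpha N$ form a $(2\rho/\alpha)$-regular pair, so $(A,B)$ is $5\rho$-regular of density at least $\mu-\rho$ deterministically, with no probability needed. Setting $\rho'=5\rho\ll\mu/3$, every pair $(A,B)$ with $|A|=|B|=k$ is regular. The only randomness required is therefore the minimum-degree condition above.

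\textbf{Conclusion.} With probability at least $1-4Ne^{-\Omega(\mu^2N)}\ge1/2$, the pair $(A,B)$ is $(5\rho,\mu/3)$-super-regular with $|A|=|B|=k$. Since $1/k\ll5\rho\ll\mu/3$, Lemma~\ref{lem:blowup_ham} gives a Hamiltonian path from $x$ to $y$ in $G[A,B]$. The number of good pairs is therefore at least
\[
\tfrac12\binom{N-1}{k-1}^2\ge 2^{2N-O(\log N)}\ge 2^{2N-\zeta N}.
\]
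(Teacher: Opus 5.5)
Your proposal is correct and follows the paper's proof essentially exactly. Like the paper, you restrict to uniformly random pairs of $\lfloor N/2\rfloor$-sets containing $x$ and $y$, get regularity deterministically from Lemma~\ref{lem:slicing}(ii), get the minimum-degree condition from hypergeometric Chernoff bounds plus a union bound, and finish with Lemma~\ref{lem:blowup_ham}; only the constants differ ($(5\rho,\mu/3)$ versus the paper's $(6\rho,\mu/4)$).
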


\begin{proof}
We only count pairs $(A,B)$ with
$x\in A\subseteq X$, $y\in B\subseteq Y$, and $|A|=|B|=\lfloor N/2\rfloor$.
The number of such pairs is
$$
T=\binom{N-1}{\lfloor N/2\rfloor-1}^2\ge 2^{2N-\zeta N/2}.
$$

Choose such a pair $(A,B)$ uniformly at random.

\begin{claim*}
With probability $1-o(1)$, the pair $(A,B)$ is $(6\rho,\mu/4)$-super-regular.
\end{claim*}

Assume the claim. Then, by Lemma~\ref{lem:blowup_ham}, $G[A,B]$ contains a Hamiltonian path from $x$ to $y$. Hence a $1-o(1)$ proportion of the $T$ pairs are valid. Since $T\ge 2^{2N-\zeta N/2}$, the number of valid pairs is at least
$
2^{2N-\zeta N}.
$
\end{proof}
\begin{poc}
Put $r=\lfloor N/2\rfloor$. Since $|A|=|B|=r\ge N/3$, Lemma~\ref{lem:slicing}(ii) implies deterministically that $(A,B)$ is $6\rho$-regular and has density at least $\mu-\rho\ge \mu/2$.

It remains to verify the  minimum-degree condition. Fix $u\in X$. Since $d(u,Y)\ge \mu N$ and $B$ is a uniformly chosen $r$-subset of $Y$ containing $y$, the random variable $|N(u)\cap B|$ is hypergeometric up to the fixed inclusion of $y$, and has expectation at least $3\mu r/4$ for $N$ sufficiently large. Hence, by the Chernoff bound, there is a constant $c=c(\mu)>0$ such that
$
\Pr\bigl(|N(u)\cap B|<\mu r/4\bigr)\le e^{-cN}.
$
A union bound over all $u\in X$ gives, with probability $1-o(1)$,
$
|N(u)\cap B|\ge \mu r/4$ for every $u\in X$.
The symmetric argument gives $|N(v)\cap A|\ge\mu r/4$ for every $v\in Y$ with probability $1-o(1)$. Therefore every vertex of $A$ has at least $\mu |B|/4$ neighbours in $B$, and   every vertex of $B$ has at least $\mu |A|/4$ neighbours in $A$. Thus $(A,B)$ is $(6\rho,\mu/4)$-super-regular with probability $1-o(1)$.
\end{poc}

Gathering Lemmas~\ref{lem:slicing}, \ref{lem:path_embed} and \ref{lem:abundant_ham} together, we are now ready to prove Proposition \ref{prop:case_a}. 

\begin{proof}[Proof of Proposition~\ref{prop:case_a}]
Let
$
{P_1Q_1,\dots,P_sQ_s}
$
be the connected matching in $R$, where we identify each vertex of $R$ with the corresponding cluster of $G$. Then $s=t/2\ge 2$. 
 For each matching edge $P_iQ_i$, Lemma~\ref{lem:slicing}(iii) gives subsets
$
P_i^0\subseteq P_i$ and $Q_i^0\subseteq Q_i
$
with
$$
|P_i^0|=|Q_i^0|\ge L_1:= (1-\rho)L
$$
such that $(P_i^0,Q_i^0)$ is $(4\rho,\mu/2)$-super-regular.

\begin{claim}
There exist vertex-disjoint paths $C_1,\dots,C_s$ in $G$ such that, for each $i\in[s]$, the path $C_i$ joins a vertex $v_i\in Q_i^0$ to a vertex $u_{i+1}\in P_{i+1}^0$, where indices are taken cyclically. Moreover,
$
\left|\left(\cup_{i\in[s]}V(C_i)\right)\cap V_j\right|\le m^2
$
for every cluster $V_j$.
\end{claim}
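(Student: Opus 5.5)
We build the paths $C_1,\dots,C_s$ one at a time in the order $i=1,\dots,s$, using Lemma~\ref{lem:path_embed}. The forbidden set $F$ at each step is the union of the vertices already used.

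\textbf{Choosing the walk.} Since the matching is connected, for each $i$ there is a path in $R$ from the cluster $Q_i$ to the cluster $P_{i+1}$. Fix a shortest such path $Q_i=W_0,W_1,\dots,W_\ell=P_{i+1}$ in $R$; it has $\ell\le m-1$. Its clusters are distinct, and consecutive ones form $\rho$-regular pairs of density at least $\mu$, as Lemma~\ref{lem:path_embed} requires.

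There is one degenerate point. If $Q_i$ and $P_{i+1}$ were the same cluster we would need $\ell\ge1$, but this cannot happen because the matching edges are disjoint and $s\ge2$. If $\ell$ is small, for instance when $Q_iP_{i+1}$ is an edge of $R$, the lemma still applies with $\ell=1$.

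\textbf{Applying Lemma~\ref{lem:path_embed}.} At step $i$, let $F$ be the set of vertices of $C_1,\dots,C_{i-1}$. Take
\[
A=Q_i^0\setminus F,\qquad B=P_{i+1}^0\setminus F .
\]
Each earlier path is a path along a shortest walk, so it uses at most one vertex per cluster and at most $m$ vertices in total. Hence $|F\cap V_j|\le s\le m$ for every cluster $V_j$.

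Set $b=m^2$. Then $|F\cap V_j|\le b$ holds throughout, and $1/L\ll\rho,1/m$ gives $1/L\ll\rho,1/b$. We also have $|A|,|B|\ge(1-\rho)L-m\ge L/3$.

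Lemma~\ref{lem:path_embed} then returns a path $C_i=z_0\dots z_\ell$ in $G-F$ with $v_i:=z_0\in Q_i^0$ and $u_{i+1}:=z_\ell\in P_{i+1}^0$. It is vertex-disjoint from all earlier paths and meets each cluster at most once. The lemma's hypothesis requires $A\subseteq Z_0\setminus F$; this holds since we removed $F$ from $A$ and $B$ by hand.

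\textbf{Disjointness of endpoints.} The last path $C_s$ must end at $u_1\in P_1^0$, and the points $u_1$ and $v_1$ themselves must be disjoint from the other paths. Both are automatic, since $F$ contains every previously used vertex, including the endpoints.

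\textbf{Counting.} After all $s$ steps, each cluster meets $\bigcup_i V(C_i)$ in at most $s\le m\le m^2$ vertices, which gives the required bound.

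\textbf{Main obstacle.} The only real issue is keeping the forbidden-set bound uniform across the iteration, so that one fixed $b$ works in every application of Lemma~\ref{lem:path_embed}. Using shortest paths in $R$, each of which visits a cluster at most once, handles this. A cruder per-path bound of $m$ vertices per cluster would still give a total of at most $m^2$.
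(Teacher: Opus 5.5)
Your proposal is correct and follows the paper's proof: choose a shortest path in $R$ from $Q_i$ to $P_{i+1}$, then embed these paths one at a time via Lemma~\ref{lem:path_embed} with $b=m^2$, forbidding all previously used vertices. The only difference is bookkeeping. You note that each embedded path meets every cluster at most once, which gives at most $s\le m$ used vertices per cluster. The paper instead uses the cruder bound of $m$ vertices per path to reach $m^2$.
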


\begin{poc}
For each $i\in[s]$, choose a shortest path in $R$ from $Q_i$ to $P_{i+1}$. Such a path exists because all matching edges lie in one connected component of $R$, and it has length at most $m$.

We embed these paths one by one. Suppose that some connecting paths have already been embedded in $G$, and let $U$ be the set of their vertices. Since at most $s\le m/2$ paths are embedded and each uses at most $m$ vertices in $G$, we have
  $|U\cap V_j|\le m^2 \le \rho L$ for every cluster $V_j$. 
 Hence the endpoint sets
$
Q_i^0\setminus U
$ and $
P_{i+1}^0\setminus U
$
have size at least $L/3$.

Applying Lemma~\ref{lem:path_embed} with $b=m^2$, we obtain a path in $G-U$ from some vertex $v_i\in Q_i^0$ to some vertex $u_{i+1}\in P_{i+1}^0$, following the chosen cluster path in $R$. Repeating this for all $i\in[s]$ gives the desired paths.
\end{poc}
Let $F$ be the set of internal vertices of the paths $C_i$. Then the claim gives
$$
|F\cap V_j|\le m^2\le \rho L
$$
for every cluster $V_j$.

We now refine the super-regular pairs. For each $i\in[s]$, delete from $P_i^0\cup Q_i^0$ all vertices of $F$, while keeping the endpoints $u_i\in P_i^0$ and $v_i\in Q_i^0$. Then discard additional vertices from the larger side if necessary, so as to obtain subsets
$
P_i^*\subseteq P_i^0$ and $Q_i^*\subseteq Q_i^0
$
with
$$
u_i\in P_i^*,\qquad v_i\in Q_i^*,\qquad |P_i^*|=|Q_i^*|=:L_i.
$$
Since at most $\rho L$ vertices are lost from each cluster, we have
$
L_i\ge (1-2\rho)L.
$

We claim that $(P_i^*,Q_i^*)$ is $(16\rho,\mu/4)$-super-regular. By Lemma~\ref{lem:slicing}(ii), it is $16\rho$-regular. Moreover, every vertex of $P_i^*$ loses at most $2\rho L$ neighbours when passing from $Q_i^0$ to $Q_i^*$, and hence has degree at least
$
\frac{\mu}{2}|Q_i^0|-2\rho L\ge \frac{\mu}{4}|Q_i^*|
$
into $Q_i^*$. The same argument applies symmetrically to vertices of $Q_i^*$.

By Lemma~\ref{lem:abundant_ham}, applied with parameters $16\rho$, $\mu/4$, and $\zeta /8$, for each $i\in[s]$ there are at least
$
2^{2L_i-\zeta L_i/8}
$
pairs of sets $(A_i,B_i)$ with
$
u_i\in A_i\subseteq P_i^* $ and $ v_i\in B_i\subseteq Q_i^*
$
such that $G[A_i,B_i]$ contains a Hamiltonian path from $u_i$ to $v_i$.

For every choice of such pairs over all $i\in[s]$, concatenate the Hamiltonian path from $u_i$ to $v_i$ inside $G[A_i,B_i]$ with the fixed connecting path $C_i$ from $v_i$ to $u_{i+1}$, for every $i\in[s]$. This gives a cycle. Since the sets $P_i^*,Q_i^*$ and $F$ are disjoint, different choices of the pairs $(A_i,B_i)$ give different vertex sets. Therefore
  $$\operatorname{Cyc}(G)
\ge \prod_{i=1}^s 2^{2L_i-\zeta L_i/8}\ge 2^{s(2-\zeta/8)(1-2\rho)L} \ge 2^{tL-\zeta mL},
$$
 where the last inequality uses $t=2s$, $s\le m/2$, and $\rho\ll\zeta$.
 This proves the proposition.
\end{proof}

\section{The Balanced Dense Case: $q$ Components}\label{sec:balanced_dense}

We now consider Case (b) of the trichotomy, where the reduced graph decomposes into exactly $q$ components of nearly equal size. 

\begin{proposition}\label{prop:case_b}
Let $1/n\ll \sigma\ll \alpha,\xi,1/q\le 1$, where $q\ge 2$ is an integer. Suppose $G$ is an $n$-vertex graph whose vertex set is partitioned into $q$ blocks $B_1,\dots,B_q$ such that
$
|B_i|\ge {n}/{(2q)}
$
for all $i\in[q]$. Assume that each block $B_i$ admits a partition $B_i=A_i\cup Z_i$ satisfying
$$
|Z_i|\le \sigma |B_i|,\quad
\delta(G[A_i])\ge \left(\frac{1}{2}+5\alpha\right)|A_i|,\quad
d_G(z,A_i)\ge 5\alpha |A_i|\quad\text{for every }z\in Z_i.
$$
Then
$
\operatorname{Cyc}(G)\ge (q-\xi)2^{n/q}.
$
\end{proposition}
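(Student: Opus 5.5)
The plan is to show that each block $B_i$ on its own contains $(1-o(1))2^{|B_i|}$ cyclic subsets, and then to sum over the blocks and apply convexity. Subsets lying in different blocks are distinct, except for the empty set, so
\[
\operatorname{Cyc}(G)\ \ge\ (1-o(1))\sum_{i=1}^q 2^{|B_i|}-(q-1).
\]
Since $\sum_i|B_i|=n$ and $x\mapsto 2^x$ is convex, Jensen gives $\sum_i 2^{|B_i|}\ge q\,2^{n/q}$. The additive error $q-1$ is negligible because $|B_i|\ge n/(2q)\to\infty$. This yields $\operatorname{Cyc}(G)\ge (q-\xi)2^{n/q}$ once $n$ is large. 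The hypothesis $|B_i|\ge n/(2q)$ is used only so that the $o(1)$ term for each block is uniform in $i$.

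For a single block, choose $S\subseteq B_i$ uniformly at random, and write $S_A=S\cap A_i$ and $S_Z=S\cap Z_i$. I would first establish concentration.
\begin{itemize}
\item By Chernoff, $|S_A|=(1/2\pm\alpha/10)|A_i|$ with probability $1-e^{-\Omega(n)}$.
\item Each $a\in A_i$ has at least $(1/2+5\alpha)|A_i|$ neighbours in $A_i$. By Chernoff and a union bound over the at most $n$ vertices, with probability $1-e^{-\Omega(n)}$ every $a\in S_A$ has at least $(1/2+4\alpha)|A_i|/2$ neighbours in $S_A$. This is at least $(1/2+3\alpha)|S_A|$.
\item Each $z\in Z_i$ has at least $5\alpha|A_i|$ neighbours in $A_i$, so with the same probability every $z\in Z_i$ has at least $2\alpha|A_i|$ neighbours in $S_A$.
\end{itemize}
All three properties hold simultaneously with probability $1-o(1)$.

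On this event I would absorb the vertices of $S_Z$, of which there are at most $\sigma|B_i|$. Greedily assign to each $z\in S_Z$ two neighbours $a_z,b_z\in S_A$, with all assigned vertices distinct. This is possible because at most $2\sigma|B_i|\ll \alpha|A_i|$ vertices are ever blocked, using $\sigma\ll\alpha$ and $|Z_i|\le\sigma|B_i|$, so that $|A_i|\ge(1-\sigma)|B_i|$. Let $H$ be $G[S_A]$ with the edges $a_zb_z$ added. The set $M=\{a_zb_z\}$ is a matching, hence a linear forest, with $k\le\sigma|B_i|\le\alpha|S_A|$ edges. Moreover $\delta(H)\ge(1/2+3\alpha)|S_A|\ge (|S_A|+k)/2$. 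By Pósa's theorem, every linear forest with $k$ edges in a graph of minimum degree at least $(N+k)/2$ lies in a Hamiltonian cycle. Hence $H$ has a Hamiltonian cycle through all of $M$. Replacing each edge $a_zb_z$ by the path $a_z\,z\,b_z$ gives a Hamiltonian cycle of $G[S]$. If $S_Z=\varnothing$, Dirac's theorem applies directly. Thus a $1-o(1)$ fraction of the subsets of $B_i$ are cyclic, as required.

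The main obstacle is keeping the leading coefficient sharp. The failure probability must be $o(1)$ uniformly in $i$, which the exponential Chernoff bounds provide. The small exceptional sets $Z_i$ must also be counted inside the blocks rather than discarded, since discarding them would lose a factor $2^{|Z_i|}$. The absorption step via Pósa's linear-forest theorem is exactly what lets the exceptional vertices be counted at essentially no cost. I would double-check that the degree slack $3\alpha$ dominates $k/|S_A|\le 2\sigma$; this holds since $\sigma\ll\alpha$.
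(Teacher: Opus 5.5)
Your proposal is correct. The outer layer is exactly what the paper does: a per-block count of $(1-o(1))2^{|B_i|}$, subtracting the repeated empty sets, and convexity $\sum_i 2^{|B_i|}\ge q\,2^{n/q}$. The difference lies in the single-block Hamiltonicity step, Lemma~\ref{lem:dense_block}.

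\textbf{The paper's route.} The paper does not treat the exceptional vertices separately. On essentially the same concentration event, it checks Chv\'{a}tal's degree-sequence condition for $G[S]$ directly. The at most $|S_Z|$ low-degree vertices still have degree at least $2\alpha|S_A|>|S_Z|$, and every other vertex has degree above $r/2$. Hence $d_i>i$ for all $i<r/2$.

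\textbf{Your route.} You absorb each $z\in S_Z$ explicitly into a path $a_z z b_z$ with distinct endpoints chosen greedily. This reduces the problem to $G[S_A]$ plus a prescribed matching, to which you apply P\'{o}sa's linear-forest theorem with the slack $3\alpha|S_A|\ge k/2$.

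\textbf{Comparison.} Both arguments are sound and use the same quantitative input: $\sigma\ll\alpha$, so the exceptional vertices are few compared with their degrees into $S_A$ and with the degree surplus of $G[S_A]$. The paper's route is shorter, because Chv\'{a}tal's criterion handles the few low-degree vertices with no construction and no disjointness bookkeeping. Yours is more hands-on and modular, since it shows exactly how each exceptional vertex is threaded into the cycle. It pays for this with the greedy assignment and an appeal to P\'{o}sa's theorem in place of Chv\'{a}tal's.
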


We first prove a counting lemma for a single block.

\begin{lemma}\label{lem:dense_block}
Let $1/N\ll \sigma\ll \alpha,\xi<1$. Let $H$ be an $N$-vertex graph with a partition $V(H)=A\cup Z$ such that $|Z|\le \sigma N$. Assume that
$$
\delta(H[A])\ge \left(\frac{1}{2}+5\alpha\right)|A|
\quad\text{and}\quad
d_H(z,A)\ge 5\alpha |A| \text{ for every } z\in Z.
$$
Then
$
\operatorname{Cyc}(H)\ge (1-\xi)2^{N}.
$
\end{lemma}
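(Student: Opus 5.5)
We count subsets of $V(H)$ chosen uniformly at random. Let $S\subseteq V(H)$ be uniform, so each vertex is included independently with probability $1/2$. It suffices to show that $G[S]=H[S]$ is Hamiltonian with probability at least $1-\xi$, since then $\operatorname{Cyc}(H)\ge(1-\xi)2^N$. Write $S_A=S\cap A$ and $S_Z=S\cap Z$.

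\textbf{Step 1 (concentration).} By Chernoff bounds and a union bound over the at most $N$ vertices, with probability $1-o(1)$ we have $|S_A|=(1/2\pm\alpha/10)|A|$, every $v\in A$ has $d(v,S_A)\ge(1/2+5\alpha)|A|/2-\alpha|A|/10$, and every $z\in Z$ has $d(z,S_A)\ge 5\alpha|A|/2-\alpha|A|/10\ge 2\alpha|A|$. Each failure probability is $e^{-\Omega(\alpha^2 N)}$, because $|A|\ge(1-\sigma)N$. Consequently $\delta(H[S_A])\ge(1/2+4\alpha)|S_A|$, each $z\in S_Z$ has at least $3\alpha|S_A|$ neighbours in $S_A$, and $|S_Z|\le\sigma N\le 3\sigma|S_A|$.

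\textbf{Step 2 (Hamiltonicity on such an outcome).} I would argue deterministically rather than invoke Chv\'atal's condition directly.

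First, greedily assign to each $z\in S_Z$ two private neighbours $a_z,b_z\in S_A$, all distinct across different $z$. This is possible because each $z$ needs only $2$ while $2|S_Z|\le 6\sigma|S_A|\ll 3\alpha|S_A|$.

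Next, we need to absorb each $z$ as a detour between two consecutive vertices on a Hamilton cycle of $S_A$. To do this, contract: form $H'$ from $H[S_A]$ by treating each triple $a_z z b_z$ as a single path segment. Equivalently, we seek a Hamilton cycle of $H[S_A]$ that uses the prescribed $|S_Z|$ "virtual edges" $a_zb_z$, which form a linear forest of at most $3\sigma|S_A|$ edges. Such a cycle then lifts to a Hamilton cycle of $H[S]$ by replacing each $a_zb_z$ with $a_z z b_z$. This is the standard fact that a graph with $\delta\ge(1/2+\gamma)n'$ is Hamiltonian-connected robustly: any linear forest with at most $\gamma n'/2$ edges is contained in a Hamilton cycle of the graph together with the forest.

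One route to this fact is Pósa's theorem that $\delta\ge (n'+k)/2$ implies every linear forest with $k$ edges lies in a Hamilton cycle. Here $k\le 3\sigma|S_A|\le 8\alpha|S_A|$, so the hypothesis is satisfied.

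An alternative, if one prefers to stay with Chv\'atal-type statements, is to contract each segment $a_zzb_z$ to a single vertex adjacent to $N(a_z)\cap N(b_z)$. However, the endpoint asymmetry makes contraction messier, so I would use Pósa's linear-forest version, which is classical.

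\textbf{Main obstacle.} The expected difficulty is the Step 2 absorption of $S_Z$, whose vertices have only $\Theta(\alpha)$-fraction degree and so cannot be handled by a plain Dirac argument. The Pósa linear-forest theorem resolves it cleanly, once the private neighbours exist, since $\sigma\ll\alpha$.

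Finally, combining Steps 1 and 2, $H[S]$ is Hamiltonian with probability $1-o(1)\ge1-\xi$ for $N$ large, which gives $\operatorname{Cyc}(H)\ge(1-\xi)2^N$. The degenerate small cases $|S|\le 2$ are counted as cyclic by convention and do not affect the bound.
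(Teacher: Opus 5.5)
Your proof is correct, and Step 1 is the same concentration step as the paper's claim. Step 2, however, takes a genuinely different route.

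The paper builds no structure. It applies Chv\'atal's degree-sequence theorem directly to $H[S]$. The only vertices that can have degree below $(1/2+2\alpha)|S_A|>|S|/2$ are the at most $|S_Z|$ vertices of $S_Z$, and each of these still has degree at least $2\alpha|S_A|>|S_Z|$. Hence $d_i>i$ for every $i<|S|/2$, and Chv\'atal's condition holds at once.

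You instead do the following:
\begin{enumerate}
\item reserve two private neighbours $a_z,b_z\in S_A$ for each $z\in S_Z$;
\item add the virtual matching $\{a_zb_z\}$ to $H[S_A]$, which can only raise the minimum degree;
\item invoke P\'osa's theorem ($\delta\ge(n'+k)/2$ puts every $k$-edge linear forest on a Hamilton cycle);
\item subdivide each virtual edge by $z$.
\end{enumerate}
Your numbers check out: $k\le 3\sigma|S_A|$, while the minimum-degree surplus over $|S_A|/2$ is $4\alpha|S_A|$.

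The paper's route is shorter and needs neither the greedy reservation nor the linear-forest theorem. A few low-degree vertices, each with degree exceeding their number, are exactly what Chv\'atal's condition tolerates. So your remark that absorbing $S_Z$ is the main obstacle overstates the difficulty.

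Your route is more constructive. It exhibits the $Z$-vertices as explicit detours on a Hamilton cycle of the Dirac core. That absorbing template adapts more readily to variants needing prescribed structure, such as fixed endpoints or prescribed segments, which a degree-sequence criterion does not directly provide.
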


We shall use the following theorem of Chv\'{a}tal~\cite{Chvatal}. In the application below, we only need the immediate consequence that a graph with degree sequence $d_1\le\dots\le d_m$ is Hamiltonian whenever $d_k>k$ for every $k<m/2$.

\begin{theorem}[Chv\'{a}tal]\label{thm:chvatal}
Let $F$ be a graph on $m \ge 3$ vertices with degree sequence $d_1 \le d_2 \le \dots \le d_m$. For every integer $k < m/2$, if
$d_k \le k$ implies $d_{m-k} \ge m-k$, then $F$ is Hamiltonian.
\end{theorem}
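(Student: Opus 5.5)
We would prove Theorem~\ref{thm:chvatal} by the classical edge-maximal counterexample argument. Suppose the theorem fails for some $m\ge 3$ and choose a counterexample $F$ on $m$ vertices that is edge-maximal, meaning that adding any missing edge makes it Hamiltonian. The first thing to check is that this choice is legitimate, i.e.\ that the hypothesis on the degree sequence is preserved when edges are added. Adding an edge can only increase the sorted degree sequence coordinatewise. Hence the failure of $d_k\le k$, or the truth of $d_{m-k}\ge m-k$, both persist, and the implication ``$d_k\le k\Rightarrow d_{m-k}\ge m-k$'' survives. So we may add edges to a counterexample until adding any further edge creates a Hamiltonian cycle. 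Since $K_m$ is Hamiltonian for $m\ge3$, $F$ is not complete.

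Among all non-adjacent pairs, pick $u,v$ with $d(u)+d(v)$ maximal and $d(u)\le d(v)$. Since $F+uv$ is Hamiltonian and $F$ is not, $F$ contains a Hamiltonian path $u=x_1x_2\dots x_m=v$. We then apply the standard crossing (P\'osa) argument. If $x_{i+1}\in N(u)$ and $x_i\in N(v)$ for some $i$, then
$$x_1x_{i+1}x_{i+2}\dots x_m x_i x_{i-1}\dots x_1$$
is a Hamiltonian cycle, which is impossible. So for each of the $k:=d(u)$ neighbours $x_{i+1}$ of $u$, the predecessor $x_i$ is not a neighbour of $v$. This yields $d(u)+d(v)\le m-1$, and therefore $k< m/2$. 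Also $k\ge1$, because in the Hamiltonian cycle of $F+uv$ the vertex $u$ has degree $2$, so it has a neighbour in $F$.

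The main step is to extract the two degree bounds contradicting the hypothesis at this $k$.

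\textbf{First bound.} The set $S=\{x_i: x_{i+1}\in N(u)\}$ has exactly $k$ elements, and each is non-adjacent to $v$ (note that $v\notin S$). By the maximal choice of the pair $(u,v)$, each $x\in S$ satisfies $d(x)+d(v)\le d(u)+d(v)$, so $d(x)\le k$. Thus there are at least $k$ vertices of degree at most $k$, i.e.\ $d_k\le k$.

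\textbf{Second bound.} Every vertex $w\ne u$ not adjacent to $u$ satisfies $d(w)\le d(v)\le m-1-k$, again by maximality of $d(u)+d(v)$. There are $m-1-k$ such vertices. Together with $u$ itself, whose degree is $k\le m-1-k$, this gives $m-k$ vertices of degree less than $m-k$, so $d_{m-k}<m-k$.

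These two bounds contradict the hypothesis at this $k<m/2$, which completes the proof. The only delicate points are the monotonicity of the hypothesis under adding edges and the careful counting in the first bound, in particular making sure that $v\notin S$ so that maximality applies to every element of $S$.
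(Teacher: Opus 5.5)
Your proposal is correct. It is the standard edge-maximal counterexample argument (essentially Chvátal's original proof and the usual textbook one), and you handle each delicate point properly: the hypothesis is monotone under adding edges, $k=d(u)\ge 1$, $v\notin S$ so maximality of $d(u)+d(v)$ applies to every element of $S$, and there are $m-k$ vertices of degree at most $m-1-k$.

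The paper gives no proof of its own; it simply cites Chvátal. It also only uses the special case in which $d_k>k$ for all $k<m/2$, where the implication holds vacuously.
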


\begin{proof}[Proof of Lemma \ref{lem:dense_block}]

Generate a random subset $S\subseteq V(H)$ by including each vertex
independently with probability $1/2$. Put
\(
 S_A=S\cap A\) {and} \( S_Z=S\cap Z.
\)

\begin{claim}
With probability $1-o(1)$, the following properties hold simultaneously:
\begin{enumerate}
\item[(i)] $|S_A|=(1/2\pm \alpha)|A|$;
\item[(ii)] for every $a\in S_A$,
$d_H(a,S_A)\ge (1/2+2\alpha)|S_A|$;
\item[(iii)] for every $z\in S_Z$,
$d_H(z,S_A)\ge 2\alpha |S_A|$;
\item[(iv)] $|S_Z|<\alpha |S_A|$.
\end{enumerate}
\end{claim}

\begin{poc}
By Chernoff's inequality, with probability $1-o(1)$, (i) holds. For (ii),
fix $a\in A$. Since
$
\mathbb E[d_H(a,S_A)]\ge
\left(\frac{1}{2}+5\alpha\right)\frac{|A|}{2}=
\left(\frac{1}{4}+\frac{5\alpha}{2}\right)|A|,
$
it follows from Chernoff's inequality that
$
\Pr\left(d_H(a,S_A)<\left(\frac{1}{4}+2\alpha\right)|A|\right)=o(1/N).
$
A union bound over all $a\in A$ shows that, with probability $1-o(1)$,
$
d_H(a,S_A)\ge \left(\frac{1}{4}+2\alpha\right)|A| \ge \left(\frac{1}{2}+2\alpha\right)|S_A|$
 for every $a\in A$.

The same Chernoff and union-bound argument, using $d_H(z,A)\ge 5\alpha |A|$ for every $z\in Z$, gives   with probability $1-o(1)$  that $d_H(z,S_A)\ge (\alpha+3\alpha^2)|A|$ for every $z\in Z$.  On the event in (i), this is at least $2\alpha |S_A|$, and hence (iii) holds.

Finally, on the event in (i), the lower bound $|S_A|\ge (1/2-\alpha)|A|$, together with $|Z|\le \sigma N$, gives
$
|S_Z|\le |Z|\le \sigma N<\alpha |S_A|.
$
Thus all four properties hold simultaneously with probability $1-o(1)$.
\end{poc}

Condition on these four properties. Let $r=|S|$ and let
 $d_1\le d_2\le \dots\le d_r$
be the degree sequence of $H[S]$. We verify that
 $d_i>i$ for every $i<r/2$.

First suppose $i\le |S_Z|$. By (ii) and (iii), every vertex of $S$ has
degree in $H[S]$ at least $2\alpha |S_A|$. Hence
 $d_i\ge 2\alpha |S_A|>|S_Z|\ge i.$

Next suppose $|S_Z|<i<r/2$. Among the $i$ vertices of smallest degree,
at least one lies in $S_A$. Therefore, by (ii),
\[
 d_i\ge \left(\frac{1}{2}+2\alpha\right)|S_A|>\frac{|S_A|+|S_Z|}{2}
=\frac{r}{2}>i.\]
Consequently $d_i>i$ for all $i<r/2$.  Chv\'{a}tal's
Theorem~\ref{thm:chvatal} implies that $H[S]$ is Hamiltonian with probability at least $1-o(1)$, which gives
$ \operatorname{Cyc}(H)\ge (1-o(1))2^N\ge  (1-\xi)2^N,$
as required.
\end{proof}
\medskip

\begin{proof}[Proof of Proposition~\ref{prop:case_b}]
Applying Lemma~\ref{lem:dense_block} to each $G[B_i]$ gives
$
\operatorname{Cyc}(G[B_i])\ge \left(1-\frac{\xi}{2q}\right)2^{|B_i|}$
for every $i\in[q]$.
Every cyclic subset of $G[B_i]$ is also a cyclic subset of $G$. Moreover, apart from the empty set, the cyclic subsets counted inside different blocks are distinct. Hence,
$$
\operatorname{Cyc}(G)
\ge \sum_{i=1}^q\left(\left(1-\frac{\xi}{2q}\right)2^{|B_i|}-1\right)
= \left(1-\frac{\xi}{2q}\right)\sum_{i=1}^q 2^{|B_i|}-q 
\ge \left(1-\frac{\xi}{2q}\right)q2^{n/q}-q 
\ge (q-\xi)2^{n/q}.
$$ 
This proves the proposition.
\end{proof}

\section{The Near-Critical Case: $q+1$ Components}\label{sec:near_critical}

We now consider Case (c) of the trichotomy, where the reduced graph separates into $q+1$ components. 
We prove the following proposition, allowing $k=2$ for later use in the proof of Proposition~\ref{prop:dirac_rate}.
\begin{proposition}\label{prop:case_c}
Let $k\ge 2$ be an integer and let $C>0$. Suppose
$
1/n\ll \theta,\rho,\lambda\ll 1/k,1/C.
$
Let $G$ be an $n$-vertex $d$-regular graph satisfying
$
k(d+1)>n,
$
and suppose that its vertex set is partitioned as
$
V(G)=V_0\cup A_1\cup\dots\cup A_k
$
such that the following hold:
\begin{enumerate}
\item[(i)] $|V_0|\le \rho n$;
\item[(ii)] $ |A_i|= (1/k\pm\lambda)n$ for every $i\in[k]$;
\item[(iii)] $d_G(v,A_i)\ge |A_i|-\theta n$ for every $i\in[k]$ and every $v\in A_i$.
\end{enumerate}
Then
$$
\operatorname{Cyc}(G)\ge 
\begin{cases}
2^{(1-3\lambda)n}, & \text{if } k=2,\\
C\cdot 2^{n/(k-1)}, & \text{if } k\ge 3.
\end{cases}
$$
\end{proposition}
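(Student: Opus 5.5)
The plan follows the overview of Case (c). Since $k(d+1)>n$ and every $|A_i|\le(1/k+\lambda)n$, a vertex cannot have all its $d$ neighbours inside its own block. We will count edges leaving the blocks and find two blocks $A_i,A_j$ joined by two disjoint short paths. We then count cyclic subsets of the form (Hamiltonian path inside $A_i$) $+$ connector $+$ (Hamiltonian path inside $A_j$) $+$ connector.

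\textbf{Step 1: many cross edges.} By (iii), each $G[A_i]$ has minimum degree at least $|A_i|-\theta n$, so it is nearly complete. A vertex $v\in A_i$ has $d\ge n/k-1$ neighbours in total. It has at most $|A_i|-1$ neighbours inside $A_i$, so if $|A_i|$ is small, $v$ has many neighbours outside $A_i$. The sizes in (ii) could all be as large as $(1/k+\lambda)n$, so this local argument may fail. Instead I would use a global count: $\sum_i |A_i|\le n$, and hence some block has $|A_i|\le n/k$. Each vertex of that block then has at least $d-(|A_i|-1)\ge n/k-|A_i|$ neighbours outside $A_i$; this quantity could be $0$.

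This is the main obstacle. The fix I would try is to use regularity of the degrees together with the equality $\sum_v d=dn$. Count the non-edges inside the blocks and the edges touching $V_0$. The edges leaving $\bigcup A_i$ satisfy $e(\bigcup A_i, \overline{\cdot})+2\sum_{i<j}e(A_i,A_j)=\sum_i(d|A_i|-2e(A_i))$. Since $2e(A_i)\le |A_i|(|A_i|-1)$, we get
\[
\sum_i\bigl(d|A_i|-2e(A_i)\bigr)\ge \sum_i |A_i|(d+1-|A_i|).
\]
When all $|A_i|\le d+1$, which holds if $V_0$ absorbs the excess, this is nonnegative. To get strictly many cross edges I would argue as follows. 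A block of size at least $d+2$ that is nearly complete is impossible to sustain because degrees are exactly $d$: a vertex of $A_i$ has at most $d$ neighbours, so $|A_i|-\theta n\le d$. Hence all blocks have size at most $d+\theta n$, and each $A_i$ has $|A_i|(d+1-|A_i|)+(\text{missing inner degree})$ outgoing edge-ends.

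The edges to $V_0$ number at most $d|V_0|\le\rho n^2$. So if the inter-block edges were fewer than $\sqrt{\rho}n^2$, every vertex of $\bigcup A_i$ would have essentially all of its $d$ neighbours inside its own block. This would force $|A_i|\ge d+1-o(n)$, with the $o(n)$ error controlled by $\sqrt\rho+\theta$. Then $k(d+1)-o(n)\le n$.

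Combined with $k(d+1)>n$ this is not yet a contradiction, so I expect to need an exact-count refinement. Let $v\in A_i$ be a vertex with no neighbour outside $A_i$; then $|A_i|\ge d+1$. If every block contained such a vertex, we would get $n\ge \sum|A_i|\ge k(d+1)>n$, a contradiction. So some block $A_i$ has all of its vertices sending an edge outside. Those vertices are matched into $V_0\cup\bigcup_{j\ne i}A_j$. Since $V_0$ is small, vertices in $V_0$ have bounded use: each has degree $d$, so matching arguments give a matching of size $\Omega(n)$ from $A_i$ into $V(G)\setminus A_i$. Vertices landing in $V_0$ can be routed onward. A vertex $x\in V_0$ has $d\ge n/k-1$ neighbours, most of them in blocks, so a path $a\,x\,b$ with $a\in A_i$ and $b\in A_j$ exists for some $j\neq i$. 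Alternatively $x$ returns to $A_i$, and then we iterate. Pigeonholing over $j$ yields two vertex-disjoint paths $P,P'$ of length at most $3$ between $A_i$ and $A_j$ for some fixed $j\ne i$, with endpoints $a,a'\in A_i$ and $b,b'\in A_j$.

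\textbf{Step 2: counting.} Set $A_i'=A_i\setminus V(P\cup P')\cup\{a,a'\}$, and define $A_j'$ similarly. Each is nearly complete with minimum degree at least $|A_i'|-\theta n-O(1)$. For any $S\subseteq A_i'\setminus\{a,a'\}$ with $|S|\ge 2\theta n$, the graph $G[S\cup\{a,a'\}]$ has a Hamiltonian $a$--$a'$ path. This follows from an Ore-type or Chv\'atal-type condition on $S\cup\{a,a'\}$ with $a,a'$ joined by an auxiliary edge, since degrees exceed half the order; a random-subset Chernoff argument as in Lemma~\ref{lem:dense_block} handles this. The same holds in $A_j'$. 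Combining these paths with $P$ and $P'$ gives a cycle, and distinct choices of the two subsets give distinct vertex sets. Hence
\[
\operatorname{Cyc}(G)\ge (1-o(1))2^{|A_i|+|A_j|-O(1)}\ge 2^{(2/k-2\lambda)n-O(1)}.
\]
For $k=2$ this is at least $2^{(1-3\lambda)n}$. For $k\ge3$ we have $2/k>1/(k-1)$, so for $\lambda$ small the surplus $2^{\Omega(n)}$ beats $C$. The hardest part is Step 1: producing the two disjoint cross paths between the same pair of blocks from the strict inequality $k(d+1)>n$.
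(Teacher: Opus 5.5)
Your overall plan matches the paper: find two connectors between one pair of blocks, then count independent Hamiltonian paths inside the two near-complete blocks. However, Step~1 has a genuine gap, and Step~1 is where the real work of the proof lies. Your exact-count observation is correct: not every block can contain a vertex with all its neighbours inside the block. But it is far too weak, and the steps built on it fail.

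First, the claim that vertices of $V_0$ ``have bounded use'', giving a matching of size $\Omega(n)$ from $A_i$ into $V(G)\setminus A_i$, is false. A vertex of degree $d\approx n/k$ can see almost a whole block. Take $k=2$, $d=s^2+s$, $n=2d$. Let $A_1,A_2$ be cliques of order $s^2$, and let $V_0=C_1\cup C_2$ with $C_1,C_2$ independent sets of size $s$. Join $A_1$ completely to $C_2$ and $A_2$ completely to $C_1$. Let the $t$-th vertex of $C_1$ (resp.\ $C_2$) see the $t$-th part of a partition of $A_1$ (resp.\ $A_2$) into $s$ sets of size $s$. This graph is $d$-regular, satisfies (i)--(iii) and $k(d+1)>n$, yet $e(A_1,A_2)=0$, and every matching from $A_1$ to its complement has at most $2s=O(\sqrt n)$ edges.

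Second, your choice of $A_i$ ignores $V_0$. Add to this example a disjoint $K_{d+1}$, split as a new block of order $d$ plus one extra vertex of $V_0$. This gives a $k=3$ instance ($n=3d+1$) in which the new block passes your test, yet has no path at all to the other blocks. So ``$x$ returns to $A_i$, and then we iterate'' has nowhere to go.

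Third, pigeonholing over $j$ needs at least $k$ vertex-disjoint paths leaving $A_i$, and this is never established.

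The missing idea is a quantitative use of exact regularity, after first absorbing $V_0$. The paper assigns each $z\in V_0$ to a block where it has the most neighbours. The resulting sets $B_i=A_i\cup Z_i$ partition $V(G)$, and $d_G(z,A_i)\ge n/(2k^2)$ for $z\in Z_i$. Lemma~\ref{lem:compensating} then shows that some $G[B_i,B_j]$ contains a matching of size $c\sqrt n$. Otherwise, K\H{o}nig's theorem gives vertex covers whose traces $C_i\subseteq B_i$ have total size $O(k^2c\sqrt n)$, and no edges join the uncovered parts. Set $\Delta_i=d+1-|B_i|$, so that $\sum_i\Delta_i=k(d+1)-n\ge 1$. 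Every edge leaving an uncovered part must land in some $C_j$, and counting these edges gives $(d+1)\sum_i\Delta_i\le\sum_i(|\Delta_i|+|C_i|)^2$. The left side exceeds $n/k$, while the right side is $O_k(c^2n)$, a contradiction.

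Two edges of this matching, extended through the many $A$-neighbours of any endpoint lying in $Z_i$ or $Z_j$, give your two connectors of length at most $3$. The example above shows that $\sqrt n$ is the true order, so no argument aiming at linear-size cross matchings can work.

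Your Step~2 is then fine in outline, with one correction. A Hamilton cycle in $G[S\cup\{a,a'\}]+aa'$ need not use the edge $aa'$. Instead, use Hamilton-connectedness under $\delta\ge (N+1)/2$, or the paper's trick: take a Dirac cycle on $S$ and splice in $a,a'$ at two consecutive vertices of the cycle.
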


The proof has three ingredients. First, a global edge-counting argument forces a large matching between two blocks. Second, inside each dense block, there are exponentially many Hamiltonian paths between any prescribed pair of vertices. Finally, two cross-block edges, together with these internal paths, form cycles.

\begin{lemma}\label{lem:compensating}
Let $k\ge 2$ be an integer and let $1/n\ll c\ll 1/k$. Let $G$ be an $n$-vertex $d$-regular graph, and let
$
V(G)=A_1\cup\dots\cup A_k
$
be a partition such that $|A_i|\ge n/(3k)$ for every $i\in[k]$. If
$
k(d+1)-n\ge 1,
$
then there exist distinct $i,j\in[k]$ such that the bipartite graph $G[A_i,A_j]$ contains a matching of size at least $c\sqrt n$.
\end{lemma}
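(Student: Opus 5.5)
The plan is a contrapositive: assume every bipartite graph $G[A_i,A_j]$ has maximum matching smaller than $c\sqrt n$, and derive a contradiction with the degree-counting identities forced by $d$-regularity and $k(d+1)>n$.

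\textbf{Step 1: a small cover of all cross edges.} By K\H{o}nig's theorem, each $G[A_i,A_j]$ with $i\neq j$ has a vertex cover $W_{ij}$ with $|W_{ij}|<c\sqrt n$. Put $W=\bigcup_{i\neq j}W_{ij}$, so $|W|\le k^2c\sqrt n$. Every edge of $G$ between different parts meets $W$. Consequently, for every $j$, each vertex of $A_j\setminus W$ has all its neighbours in $A_j\cup W$.

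\textbf{Step 2: a deficient part and the per-part identity.} Since $\sum_j|A_j|=n\le k(d+1)-1$, some part $A_i$ has $a_i:=|A_i|\le d$. For every part $A_j$, regularity gives the identity
\[
e(A_j,V\setminus A_j)=d a_j-2e(A_j)=a_j(d+1-a_j)+D_j ,
\]
where $D_j:=a_j(a_j-1)-2e(A_j)\ge 0$ counts ordered non-adjacent pairs inside $A_j$. For the part $A_i$ this gives at least $a_i\ge n/(3k)$ cross edges, and each vertex of $A_i$ sends at least $d+1-a_i\ge1$ edge out. In particular, every $u\in U:=A_i\setminus W$ has a neighbour in $W\setminus A_i$.

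\textbf{Step 3: convert to a matching, splitting by cross-degree.} Let $s$ be the number of cross edges in $U\times(W\setminus A_i)$, and let $\Delta$ be the maximum cross-degree of a vertex.
\begin{itemize}
\item If $\Delta\le\sqrt n/(Ck)$ for a suitable constant $C$, a greedy argument (or Vizing's theorem) applied to these cross edges yields a matching of size at least $s/(2\Delta)\ge c\sqrt n$. Splitting this matching among the $k-1$ other parts, some pair $(A_i,A_j)$ receives a matching of size $\ge c\sqrt n$. This is the desired contradiction.
\item Otherwise, a vertex $y\in W\cap A_j$ has many cross neighbours. Its internal degree in $A_j$ is then $d-\deg_{\mathrm{out}}(y)$, which creates deficiency $D_j$ in its own part. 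Pushing this through the identity of Step 2 for $A_j$ shows that $A_j$ also emits many cross edges.
\end{itemize}

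\textbf{Step 4: the global budget (the main obstacle).} I then want to add the identities of Step 2 over all $j$. This gives
\[
2\,e_{\mathrm{cross}}=\sum_j a_j(d+1-a_j)+\sum_j D_j .
\]
The aim is to lower-bound the right-hand side using $k(d+1)-n\ge1$ together with the forced deficiencies. The upper side is controlled by the fact that every cross edge is incident to one of the at most $k^2c\sqrt n$ vertices of $W$.

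I expect this step to be the main obstacle. The naive bound (cross edges $\le|W|\,d$) is of order $c\,n^{3/2}$, while the trivial lower bound is only linear. What is missing is an argument that high-cross-degree vertices of $W$ cannot absorb all the cross edges leaving the small part $A_i$.

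My first attempt would use the internal structure. Vertices of $A_j\setminus W$ are adjacent only to $A_j\cup W$. They therefore have internal degree at least $d-|W|$, so
\[
a_j\ge d+1-O(c\sqrt n)\quad\text{for every } j .
\]
Combined with $\sum_j a_j\le k(d+1)-1$, this pins every part to within $O(kc\sqrt n)$ of $d+1$. The remaining task is to compare $D_j$ with the cross-degree of the vertices of $W\cap A_j$. Each $y\in W\cap A_j$ with cross-degree $t$ forces at least $t-O(c\sqrt n)$ non-neighbours of $y$ inside $A_j$. Each such non-adjacent pair in turn has to be paid for by a cross edge at a vertex of $A_j\setminus W$, and that vertex is then itself in $W$'s neighbourhood. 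Iterating this charging should show that the $\sqrt n$-scale stars around $W$ propagate deficiency. That forces either more than $|W|$ vertices of cross-degree at least $\sqrt n$, or small $\Delta$; both cases contradict Step 3. This produces a matching of size $c\sqrt n$ between some pair $A_i,A_j$, with $c\ll1/k$ absorbing all constants.
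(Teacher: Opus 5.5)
Steps 1--2 are correct. So is your observation at the start of Step 4 that every part satisfies $|a_j-(d+1)|=O(k|W|)$; with $C_j=W\cap A_j$, $B_j=A_j\setminus W$, $\Delta_j=d+1-a_j$, this is exactly the paper's first claim. But the proof stops where the real content begins. Step 4 is only a heuristic (``iterating this charging should show\dots''), and the case split in Step 3 does not help. In its second branch, knowing that $A_j$ emits many cross edges gives no contradiction, since all of those edges may land in $W$. You also do not show how any charging produces your intended endpoint (more than $|W|$ vertices of cross-degree at least $\sqrt n$, or small $\Delta$). Your per-pair charging, ``each non-adjacent pair must be paid for by a cross edge at a vertex of $A_j\setminus W$'', holds only relative to the baseline $\Delta_j$, which is negative for parts larger than $d+1$. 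This is precisely where the hypothesis $\sum_j\Delta_j=k(d+1)-n\ge 1$ must enter quantitatively, and your sketch never uses it.

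The missing idea is a single global double count in which the non-adjacency terms cancel. Let $h_j$ be the number of ordered non-adjacent pairs $(b,v)$ with $b\in B_j$ and $v\in A_j\setminus\{b\}$. Regularity gives exactly $e(B_j,V\setminus A_j)=|B_j|\Delta_j+h_j$. Each $y\in C_j$ spends part of its degree on $B_j$, so
\[e(C_j,V\setminus A_j)\le |C_j|d-e(C_j,B_j)=|C_j|(d-|B_j|)+\bar e(C_j,B_j)\le |C_j|(d-|B_j|)+h_j,\]
where $\bar e$ counts non-adjacent pairs. Every cross edge at a $B$-vertex ends in some $C_l$, so summing over $j$ and cancelling the $h_j$ gives $\sum_j|B_j|\Delta_j\le\sum_j|C_j|(d-|B_j|)$. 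Substituting $|B_j|=d+1-\Delta_j-|C_j|$ turns this into
\[(d+1)\sum_j\Delta_j\le\sum_j\bigl(|\Delta_j|+|C_j|\bigr)^2=O(k^7c^2n).\]
The left side is at least $d+1>n/k$, a contradiction for $c\ll 1/k$. So the right comparison is not $|W|d$ against a linear lower bound. It is the $B$-vertices' net demand, of order $n/k$, against the spare capacity of $W$, namely $\sum_j|C_j|(\Delta_j+|C_j|-1)=O(c^2n)$. Neither the split by cross-degree nor any iteration is needed.
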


\begin{proof}
Let $a_i=|A_i|$ and define
$
\Delta_i=d-a_i+1.
$
Then
$\sum_{i=1}^k\Delta_i=k(d+1)-n\ge1.
$
We assume that
$
c\le k^{-9}.
$ Suppose, for a contradiction, that
$
\nu(G[A_i,A_j])<c\sqrt n$
for every distinct $i,j\in[k]$.
By K\H{o}nig's theorem, for every pair $i<j$, there is a vertex cover
$
D_{ij}\subseteq A_i\cup A_j
$
of size less than $c\sqrt n$ that covers all edges between $A_i$ and $A_j$.
For each $i\in[k]$, define
$
C_i:=A_i\cap\bigcup_{j\ne i}D_{ij}, $ $
B_i:=A_i\setminus C_i,
$
and put
$
C:=\sum_{i=1}^k |C_i|.
$
Then
$
C<\binom{k}{2}c\sqrt n.
$

\begin{claim}
For every $i\in[k]$, the set $B_i$ is non-empty, there are no edges between $B_i$ and $B_j$ whenever $i\ne j$, and 
$
|\Delta_i|\le kC.
$
\end{claim}

\begin{poc}
Since $C<\binom{k}{2}c\sqrt n$ and $|A_i|\ge n/(3k)$, every $B_i$ is non-empty.
By the definition of $C_i$, every edge between $A_i$ and $A_j$ is covered by $C_i\cup C_j$. Hence there are no edges between $B_i$ and $B_j$ for distinct $i,j$.

Now fix $i\in[k]$ and $v\in B_i$. Since there are no edges between $B_i$ and $B_j$ for $j\ne i$, all neighbours of $v$ outside $A_i$ must lie in $\bigcup_{j\ne i}C_j$. Therefore
$
d\le (a_i-1)+(C-|C_i|).
$
Thus
$
\Delta_i=d-a_i+1\le C.
$
On the other hand, since $\sum_{j=1}^k\Delta_j\ge1$, we have
$
\Delta_i\ge 1-\sum_{j\ne i}\Delta_j> -(k-1)C.
$
Therefore $|\Delta_i|\le kC$ for every $i\in[k]$.
\end{poc}

\begin{claim}
We have
$
\sum_{i=1}^k |B_i|\Delta_i
\le
\sum_{i=1}^k |C_i|(d-|B_i|).
$
\end{claim}

\begin{poc}
For each $i$, define
$
h_i:=\sum_{v\in B_i}\bigl(a_i-1-d_{G[A_i]}(v)\bigr).
$
This is the number of ordered missing incidences from $B_i$ to $A_i$. We have
\begin{align*}
e(B_i,V(G)\setminus A_i)
=
\sum_{v\in B_i}\bigl(d-d_{G[A_i]}(v)\bigr) 
=
\sum_{v\in B_i}\bigl(\Delta_i+a_i-1-d_{G[A_i]}(v)\bigr) =
|B_i|\Delta_i+h_i.
\end{align*}
Similarly,
\begin{align*}
e(C_i,V(G)\setminus A_i)
\le |C_i| d-e(C_i,B_i) =|C_i|(d-|B_i|)+|B_i||C_i|-e(C_i,B_i) \le |C_i|(d-|B_i|)+h_i.
\end{align*}

Every edge leaving $B_i$ for another block must land in some $C_j$. Hence, after summing over all $i$, the edges counted by
$
\sum_{i=1}^k e(B_i,V(G)\setminus A_i)
$
are bounded above by the edges counted by
$
\sum_{i=1}^k e(C_i,V(G)\setminus A_i).
$
Using the two estimates above,  we obtain the claimed inequality.
\end{poc}

By the second claim, substituting 
$
|B_i|=a_i-|C_i|=d+1-\Delta_i-|C_i|,
$
we have
$$
\sum_{i=1}^k (d+1-\Delta_i-|C_i|)\Delta_i
\le
\sum_{i=1}^k |C_i|(\Delta_i+|C_i|-1).
$$
Rearranging, we obtain
$$
(d+1)\sum_{i=1}^k\Delta_i
\le
\sum_{i=1}^k\bigl(\Delta_i^2+2|C_i|\Delta_i+|C_i|^2-|C_i|\bigr)
\le
\sum_{i=1}^k\bigl(|\Delta_i|+|C_i|\bigr)^2.
$$
The left-hand side is larger than $n/k$, since
$
\sum_{i=1}^k\Delta_i=k(d+1)-n\ge1
$
and $d+1>n/k$.
On the other hand, by the first claim, $|\Delta_i|\le kC$. Hence
 the right-hand side is at most $
k(k+1)^2C^2<n/k, $
a contradiction. Thus there must exist distinct $i,j\in[k]$ such that
$
\nu(G[A_i,A_j])\ge c\sqrt n.
$
This completes the proof.
\end{proof}

\begin{lemma}\label{lem:near_complete_ham}
Let $k\ge 2$ be an integer and let $1/n\ll \theta\ll 1/k$. Let $H$ be an $N$-vertex graph with
$
N\ge n/(3k)
$
and
$
\delta(H)\ge N-\theta n.
$
Then, for any distinct $x,y\in V(H)$, there are at least
$
 2^{N-3}
$
sets $S\subseteq V(H)$ with $x,y\in S$ such that $H[S]$ contains a Hamiltonian path from $x$ to $y$.
\end{lemma}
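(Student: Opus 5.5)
We prove Lemma~\ref{lem:near_complete_ham} by a random-subset argument together with Chv\'{a}tal's theorem, as in Lemma~\ref{lem:dense_block}. A Hamiltonian path from $x$ to $y$ is just a Hamiltonian cycle through the edge $xy$. So we add the edge $xy$ if needed and find a Hamiltonian cycle containing a prescribed edge. Alternatively, we add an auxiliary vertex $w$ adjacent only to $x$ and $y$: in $H[S]+w$ any Hamiltonian cycle must use $xw$ and $wy$, so deleting $w$ leaves a Hamiltonian $x$--$y$ path in $H[S]$. We use the second device, since it keeps us in the setting of Theorem~\ref{thm:chvatal}.

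\textbf{Step 1: the random set.} Choose $S\subseteq V(H)$ with $x,y\in S$, putting each other vertex in $S$ independently with probability $1/2$. There are $2^{N-2}$ such sets, so it suffices to show that at least half of them work; this gives the claimed $2^{N-3}$. Each vertex $v$ has at most $\theta n\le 3k\theta N$ non-neighbours in $H$. By Chernoff's bound, with probability $1-o(1)$ we have $|S|=(1/2\pm\theta)N$. Deterministically, every $v\in S$ has at most $\theta n$ non-neighbours in $S$, so
$d_{H[S]}(v)\ge |S|-1-\theta n\ge (1-7k\theta)|S|$.
No concentration is needed for the degrees, because the non-neighbourhoods are small in absolute terms.

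\textbf{Step 2: Chv\'{a}tal on $F=H[S]+w$.} Let $r=|S|+1$. The vertex $w$ has degree $2$. Every other vertex has degree at least $(1-7k\theta)|S|$, which exceeds $r/2+1$ since $\theta\ll 1/k$. So the degree sequence satisfies $d_1=2$ and $d_i>r/2>i$ for $2\le i<r/2$. At $i=1$ we have $d_1\le 1$ false, so the condition of Theorem~\ref{thm:chvatal} holds there vacuously. For $i\ge2$ we have $d_i>i$. Hence $F$ is Hamiltonian, and therefore $H[S]$ has a Hamiltonian $x$--$y$ path.

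The probability of success is $1-o(1)\ge 1/2$, which completes the count. The only point needing care is that the degree-$2$ vertex $w$ is compatible with Chv\'{a}tal's condition at $k=1$, which it is since $2>1$. The rest is routine, so I do not expect a real obstacle here.
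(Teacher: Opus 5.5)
Your proof is correct. Its skeleton matches the paper's: essentially every $S\ni x,y$ of linear size works, because each vertex of $H[S]$ misses only $\theta n=O(k\theta|S|)$ other vertices. The two arguments differ in how the Hamiltonian $x$--$y$ path is produced.

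\textbf{The paper's route.} It counts deterministically all sets $S$ with $|S|\ge N/3$. It applies Dirac's theorem to $T=S\setminus\{x,y\}$ to get a Hamiltonian cycle $C$. It then splices in $x$ and $y$ by a successor/pigeonhole argument, which finds consecutive vertices $x',y'$ on $C$ with $xx',yy'\in E(H)$.

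\textbf{Your route.} You reduce Hamiltonian connectivity to Hamiltonicity with the degree-$2$ gadget vertex $w$, and then apply Theorem~\ref{thm:chvatal}. Since $d_1=2>1$ and $d_i>r/2>i$ for $2\le i<r/2$, even the simplified form ``$d_i>i$ for all $i<r/2$'' quoted in the paper suffices.

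\textbf{Comparison.} Your gadget is a clean black-box reduction that reuses a tool already in the paper and avoids the rerouting step. The paper's argument needs only Dirac's theorem, and it gives the deterministic statement that every $S$ with $|S|\ge N/3$ works.

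Your Chernoff step is not really needed. Your degree estimate uses only $|S|\ge cN$ for a constant $c$: for $|S|\ge N/3$ one has $\theta n\le 9k\theta|S|$, and the Chv\'atal check goes through unchanged. So you could count all sets with $|S|\ge N/3$ exactly as the paper does. Alternatively, keep the probabilistic count, which gives the slightly stronger bound $(1-o(1))2^{N-2}$.
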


\begin{proof}
We only count subsets $S\subseteq V(H)$ containing $x$ and $y$ with $|S|\ge N/3$. The total number of such $S$ is at least $$\sum_{N/3 \le i\le N-2}\binom{N-2}{i}\ge 2^{N-3}.$$  For every such $S$, we have
$$
\delta(H[S])\ge |S|-1-(N-1-\delta(H))\ge |S|-\theta n>\frac{|S|}{2}+1.
$$
{Let $T=S\setminus\{x,y\}$.  Then every vertex of $T$ has degree in $H[T]$ greater than $|S|/2-1=|T|/2$.  By Dirac's Hamilton-cycle theorem~}\cite{Dirac52}, $H[T]$  contains a Hamiltonian cycle $C$.   Moreover,  $x$ and $y$ each have at least $|S|/2$ neighbours   on  $C$.  Orient $C$.  If no successor of a vertex in $N_C(x)$ lay in $N_C(y)$, then $|N_C(y)|\le |C|-|N_C(x)|<|S|/2$, a contradiction.  Hence there are adjacent vertices $x',y'$  on $C$ such that $xx'$ and $yy'$ are edges.  Deleting the edge $x'y'$ from  $C$  and adding $xx'$ and $yy'$ gives a Hamiltonian  path from $x$ to $y$  in $H[S]$.
\end{proof}

\begin{proof}[Proof of Proposition \ref{prop:case_c}]
Assign each $z\in V_0$ to an index $i$ maximizing $d_G(z,A_i)$, let $Z_i$ be the set of vertices assigned to $A_i$, and define  $B_i = A_i \cup Z_i$. The choice of $i$ ensures that
$$ d_G(z, A_i) \ge \frac{1}{k}(d-|V_0|) \ge \frac{1}{k}\left(\frac{n}{k} - 1 - \rho n\right) \ge \frac{n}{2k^2}.$$ 

 Applying Lemma \ref{lem:compensating} to the partition 
$V(G) = B_1 \cup \dots \cup B_k,$
there exist distinct indices $i$ and $j$ such that the bipartite graph $G[B_i, B_j]$ contains a matching of size at least $c\sqrt{n}$, for some $c=c(k)>0$. 

We select two disjoint edges from this matching, say $e_1 = u_1v_1$ and $e_2 = u_2v_2$, with $u_1,u_2 \in B_i$ and $v_1,v_2 \in B_j$. We will extend these two edges into short connecting paths between $A_i$ and $A_j$. For $l \in \{1, 2\}$, if $u_l \in A_i$, we set $x_l = u_l$. Otherwise, $u_l \in Z_i$, and since $u_l$ has at least $n/(2k^2)$ neighbours in $A_i$, we choose  $x_l \in N_G(u_l) \cap A_i$ while avoiding the $O(1)$ vertices already chosen. Similarly, if $v_l \in A_j$, we set $y_l = v_l$; otherwise, we choose $y_l \in N_G(v_l) \cap A_j$ while avoiding the vertices already chosen. Thus $x_1,x_2,y_1,y_2$ are distinct, and  we obtain two disjoint paths $P_1$ and $P_2$ of length at most 3 that connect $x_1, x_2 \in A_i$ to $y_1, y_2 \in A_j$. Let $F$ be the set of internal vertices used by these paths. Note that $|F| \le 4$.

We apply Lemma \ref{lem:near_complete_ham} to  $H_i = G[A_i \setminus F]$. Since at most four vertices are removed, $H_i$ satisfies the same hypothesis after replacing $\theta$ by $2\theta$ and increasing $n_0$ if necessary. This yields at least 
$2^{|A_i| - |F|- 3} \ge 2^{|A_i| -7}$
sets $S_i$ satisfying $\{x_1, x_2\} \subseteq S_i \subseteq A_i \setminus F$ such that $G[S_i]$ has a Hamiltonian path from $x_1$ to $x_2$. 

Likewise, applying the lemma to $H_j = G[A_j \setminus F]$ yields at least $2^{|A_j| - 7}$ sets $S_j$ satisfying $\{y_1, y_2\} \subseteq S_j \subseteq A_j \setminus F$ such that $G[S_j]$ has a Hamiltonian path from $y_1$ to $y_2$. 

For any pair of such subsets $(S_i, S_j)$, we can join their respective Hamiltonian paths with the fixed connectors $P_1$ and $P_2$. This forms a single cycle on the vertex set $S_i \cup S_j \cup F$. Since different pairs $(S_i, S_j)$ yield distinct vertex sets, we have
\begin{align*}
\operatorname{Cyc}(G) &\ge 2^{|A_i|-7}\cdot 2^{|A_j|-7}\ge 2^{(2/k-2\lambda)n-14} \ge \begin{cases}
2^{(1-3\lambda)n}, & \text{if } k=2,\\
C\cdot 2^{n/(k-1)}, & \text{if } k\ge 3.
\end{cases},
\end{align*}
where for $k\ge3$ we use $2/k-1/(k-1)=(k-2)/(k(k-1))>0$, the hierarchy $\lambda\ll 1/k,1/C$, and then take $n$ sufficiently large.
This proves the proposition.
\end{proof}

\section{Proof of the Staircase Theorem}\label{sec:proof}

In the previous sections, we analysed three different shapes our graph could take: non-degenerate (Proposition~\ref{prop:case_a}), balanced dense (Proposition~\ref{prop:case_b}), and near-critical (Proposition~\ref{prop:case_c}). Now, we will put everything together to prove our main result, the Staircase Theorem~\ref{thm:main}.
\begin{proposition}\label{prop:fixed_q}
Let $1/n\ll \xi\ll 1/q<1$, where $q\ge 2$ is an integer.  Let $G$ be an $n$-vertex $d$-regular graph satisfying
$
q(d+1) \le n < (q+1)(d+1).
$
Then
$$
\operatorname{Cyc}(G) \ge (q-\xi)2^{n/q}.
$$
\end{proposition}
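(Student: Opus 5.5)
Apply the degree form of Szemerédi's Regularity Lemma to $G$, with parameters $1/n\ll 1/M\ll\rho\ll\eta\ll\tau\ll\alpha,\zeta\ll\xi,1/q$ and density threshold $\mu$ with $\rho\ll\mu\ll\eta$. This gives an exceptional set $V_0$ with $|V_0|\le\rho n$, clusters $V_1,\dots,V_m$ of common size $L$ with $m\le M$, and a spanning subgraph $G'$ with $d_{G'}(v)\ge d-(\mu+\rho)n$. Let $R$ be the reduced graph of $\rho$-regular pairs of density at least $\mu$. Since $d\ge n/(q+1)-1$, the standard degree transfer gives $\delta(R)\ge(\frac1{q+1}-\eta)m$. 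We then apply Lemma~\ref{lem:trichotomy} and treat its three outcomes separately.

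\textbf{Case (a).} $R$ has a connected matching covering $t\ge(1/q+\tau)m\ge4$ vertices. Proposition~\ref{prop:case_a} with $\zeta\ll\tau$ gives
\[
\operatorname{Cyc}(G)\ge2^{tL-\zeta mL}\ge2^{(1/q+\tau-\zeta)(1-\rho)n}\ge q2^{n/q},
\]
using $mL\ge(1-\rho)n$.

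\textbf{Case (c).} $R$ has $q+1$ components $X_1,\dots,X_{q+1}$ of size $(\frac1{q+1}\pm\tau)m$. Let $A_i'$ be the union of the clusters of $X_i$. Since $G'$ has no edges between clusters in different components, each $v\in A_i'$ sends at most $(\mu+\rho)n+|V_0|$ of its $d$ edges outside $A_i'$. We also have $|A_i'|\le(\frac1{q+1}+\tau)n$ and $d\ge\frac n{q+1}-1$. Hence $v$ misses at most $|A_i'|-d+O(\mu n)\le 2\tau n$ vertices of $A_i'$, which is hypothesis (iii) of Proposition~\ref{prop:case_c} with $k=q+1$ and $\theta=2\tau$. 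The hypothesis $k(d+1)>n$ holds because $n<(q+1)(d+1)$. Proposition~\ref{prop:case_c} with $C=q$ then gives $\operatorname{Cyc}(G)\ge q2^{n/q}$.

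\textbf{Case (b).} $R$ has exactly $q$ components $Y_1,\dots,Y_q$ of size $(\frac1q\pm2q\tau)m$, and this is the main obstacle. Let $A_i$ be the union of the clusters of $Y_i$. The same argument shows that each $v\in A_i$ has at most $O(\mu n)$ neighbours outside $A_i$, so
\[
d_G(v,A_i)\ge d-O(\mu n)\ge \frac{n}{q+1}-O(\mu n).
\]
This is more than $(\frac12+5\alpha)|A_i|$ only because $|A_i|\approx n/q$ and $\frac1{q+1}>\frac1{2q}$ for $q\ge1$, so $\alpha$ must be taken small in terms of $q$.

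Each $z\in V_0$ has $d\ge n/(q+1)-1$ neighbours, so assigning it to the index maximising $d_G(z,A_i)$ gives $d_G(z,A_i)\ge n/(2q^2)\ge5\alpha|A_i|$. This produces blocks $B_i=A_i\cup Z_i$ with $|Z_i|\le\rho n\le\sigma|B_i|$.

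The delicate point is that Proposition~\ref{prop:case_b} needs $\sum_i2^{|B_i|}\ge q2^{n/q}$, which holds by convexity since $\sum_i|B_i|=n$. Its proof as written uses $|B_i|\ge n/q$ for each $i$; instead I would use convexity, $\sum_i2^{|B_i|}\ge q2^{n/q}$, directly, which still gives $(q-\xi)2^{n/q}$ with the $-q$ term absorbed. The hypothesis $|B_i|\ge n/(2q)$ follows from the component sizes. Thus Proposition~\ref{prop:case_b} applies and yields $\operatorname{Cyc}(G)\ge(q-\xi)2^{n/q}$. The remaining care lies in ordering the constants so that $\sigma\ll\alpha$ is compatible with $\rho\ll\tau$ and the degree-loss bounds.
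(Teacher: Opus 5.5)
Your proposal is correct and follows the paper's proof essentially step for step: degree-form regularity, the trichotomy for $R$, then Propositions~\ref{prop:case_a}, \ref{prop:case_b} and~\ref{prop:case_c} (with $k=q+1$ and $\theta,\lambda=O(\tau)$). Proposition~\ref{prop:case_b} needs no modification, because its step $\sum_i 2^{|B_i|}\ge q2^{n/q}$ already is the convexity bound; the only slip is bookkeeping: your displayed hierarchy puts $\zeta$ above $\tau$, whereas Case (a) needs $\zeta\ll\tau$ (the paper takes $\rho\ll\mu\ll\zeta\ll\eta\ll\tau$), and $\sigma$ must be fixed with $\rho\ll\sigma\ll\alpha,\xi,1/q$.
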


We need the following degree form of Szemer\'{e}di's regularity lemma to ensure $R$ inherits the minimum degree of $G$.

\begin{lemma}[Degree form regularity lemma]\label{lem:regularity}
For every $0 < \rho, \mu < 1$ and every integer $m_0$, there exist integers $M$ and $n_0$ such that every graph $G$ on $n \ge n_0$ vertices admits a partition 
$V(G) = V_0 \cup V_1 \cup \dots \cup V_m$
and a spanning subgraph $G' \subseteq G$ satisfying:
\begin{enumerate}
    \item[(i)] $m_0 \le m \le M$;
    \item[(ii)] $|V_0| \le \rho n$ and $|V_1| = \dots = |V_m| =: L$;
    \item[(iii)] every $V_i$ is independent in $G'$;
    \item[(iv)] for every $i \ne j$ the graph $G'[V_i, V_j]$ is either empty or a $\rho$-regular pair of density at least $\mu$;
    \item[(v)] every $v \in V(G)$ satisfies 
    $d_{G'}(v) \ge d_G(v) - (\mu + \rho)n.$
\end{enumerate}
Let $R$ be the reduced graph on vertex set $[m]$, where $ij \in E(R)$ if and only if $G'[V_i, V_j]$ is non-empty. If $G$ is $d$-regular and $\rho \le \mu/10$, then, for $n$ sufficiently large, 
$ \delta(R) \ge \left(\frac{d}{n} - 3\mu\right)m. $
\end{lemma}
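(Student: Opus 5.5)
The regularity partition itself, properties (i)--(v), is the standard degree form of Szemer\'edi's Regularity Lemma, and I would simply quote it. What needs an argument is the final assertion that $R$ inherits the regular degree of $G$. My plan is to read off $\delta(R)$ from the degree of a single vertex inside a cluster, using that $G'$-neighbourhoods can only go to $V_0$ or to clusters adjacent in $R$.

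Fix $i\in[m]$. The cluster $V_i$ is non-empty, since $mL=n-|V_0|\ge(1-\rho)n>0$. Pick any vertex $v\in V_i$. By (v) and $d$-regularity of $G$,
$$d_{G'}(v)\ge d-(\mu+\rho)n.$$
By (iii), $v$ has no $G'$-neighbours inside $V_i$. If $j\ne i$ and $ij\notin E(R)$, then $G'[V_i,V_j]$ is empty by the definition of $R$, so $v$ has no $G'$-neighbours in $V_j$ either. Hence every $G'$-neighbour of $v$ lies in $V_0$ or in some cluster $V_j$ with $ij\in E(R)$. This gives
$$d_{G'}(v)\le |V_0|+d_R(i)\,L\le \rho n+d_R(i)\,\frac{n}{m},$$
where the last step uses (ii) together with $mL\le n$.

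Combining the two displays and dividing by $n/m$ yields
$$d_R(i)\ge \Bigl(\frac{d}{n}-\mu-2\rho\Bigr)m.$$
Since $\rho\le\mu/10$, we have $\mu+2\rho\le 3\mu$, so $d_R(i)\ge(d/n-3\mu)m$. As $i$ was arbitrary, this gives the bound on $\delta(R)$.

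There is no genuine obstacle here. The only points needing care are that $V_i$ is non-empty, so that a vertex $v$ exists, and that edges of $G$ not in $G'$ are irrelevant, because we only count $G'$-degrees and (v) already accounts for the deleted edges. The phrase ``for $n$ sufficiently large'' is needed only to invoke the regularity lemma with $n\ge n_0$.
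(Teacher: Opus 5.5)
Your proof is correct and follows the paper's argument: fix a vertex of $V_i$, observe that its $G'$-neighbours lie in $V_0$ or in $R$-adjacent clusters, and combine with (v). The only cosmetic difference is that you use $L\le n/m$ (from $mL\le n$) directly instead of the paper's $L=(1\pm\rho)n/m$, and that is precisely the direction the division requires.
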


\begin{proof}
Properties (i)--(v) are the standard degree form of Szemer\'{e}di's Regularity Lemma~\cite{KS96}.
 We only need to prove the final minimum-degree estimate for $R$.
Fix an index $i \in [m]$ and pick any vertex $v \in V_i$. Since $V_i$ is independent in $G'$, all neighbours of $v$ in $G'$ lie either in $V_0$ or in clusters corresponding to neighbours of $i$ in $R$. Thus, by (v),
$
|V_0|+d_R(i)L\ge d_{G'}(v)\ge d-(\mu+\rho)n,
$
and hence
$
d_R(i)L\ge d-(\mu+\rho)n-|V_0|\ge d-(\mu+2\rho)n.
$

We know the cluster size is $L = (n - |V_0|)/m = (1 \pm \rho)n/m$, and we assume $\rho \le \mu/10$. Substituting this into the inequality yields
$ d_R(i) \ge \left(\frac{d}{n} - 3\mu\right)m$
for all indices $i$.
\end{proof}

We next prove Proposition \ref{prop:fixed_q} by sorting the reduced graph into one of our three cases.
\begin{proof}[Proof of Proposition~\ref{prop:fixed_q}]
Choose constants satisfying
$
0<\tau\ll\theta\ll 1/q.
$
Then choose $0<\alpha\ll\tau$, and choose $0<\sigma\ll\alpha,\xi,1/q$ small enough for Proposition~\ref{prop:case_b}.  Next choose $0<\eta\ll\tau$ and $m_0$   {large enough for} Lemma~\ref{lem:trichotomy}. Finally, choose
$
0<\rho\ll\mu\ll\zeta\ll\eta,\alpha,\sigma.
$

Apply Lemma~\ref{lem:regularity} to $G$ with parameters $\rho,\mu$ and minimum number of clusters $m_0$. This gives a partition $V(G)=V_0\cup V_1\cup\dots\cup V_m$ and a reduced graph $R$. Since $n<(q+1)(d+1)$, we have $d/n>1/(q+1)-o(1)$. Hence, by Lemma~\ref{lem:regularity},
$
\delta(R)\ge \left(\frac{1}{q+1}-\eta\right)m.
$
Apply now Lemma~\ref{lem:trichotomy} to $R$.

\smallskip
\noindent\textbf{Case (a): connected matching with surplus.}
Suppose $R$ contains a connected matching covering $t\ge (1/q+\tau)m$ vertices. By Proposition~\ref{prop:case_a},
$
\operatorname{Cyc}(G)\ge\operatorname{Cyc}(G')\ge 2^{tL-\zeta mL}.
$
Since $mL=n-|V_0|\ge (1-\rho)n$, we have
$$
tL-\zeta mL
\ge \left(\frac{1}q+\tau\right)(n-|V_0|)-\zeta n \ge \frac{n}q+\left(\tau-\zeta-\rho\left(\frac{1}q+\tau\right)\right)n \ge \frac{n}q+\frac{\tau n}{2},
$$
where the last inequality follows from $\zeta,\rho\ll\tau$. Therefore, by the choice of $n_0$,
$$
\operatorname{Cyc}(G)\ge 2^{n/q+\tau n/2}\ge (q-\xi)2^{n/q}.
$$

\smallskip
\noindent\textbf{Case (b): $q$ balanced dense components.}
Suppose $R$ has exactly $q$ components $Y_1,\dots,Y_q$ with $|Y_i|=(1/q\pm 2q\tau)m$ for every $i\in[q]$. For each $i\in[q]$, let $A_i=\bigcup_{j\in Y_i}V_j$. Since $|V_0|\le\rho n$ and $\rho\ll\tau$, we have
$
|A_i|=\left(\frac{1}q\pm 3q\tau\right)n
$
for every $i\in[q]$.
Because there are no edges in $G'$ between distinct components of $R$, every vertex $v\in A_i$ satisfies
$
d_G(v,A_i)\ge d_{G'}(v)-|V_0|\ge d-(\mu+2\rho)n.
$
Using $n<(q+1)(d+1)$, we have $d>n/(q+1)-1$. Therefore, 
$
d_G(v,A_i)\ge \left(\frac{1}{q+1}-\mu-2\rho-o(1)\right)n.
$
By the choice of $\alpha,\tau,\mu,\rho$ and the upper bound $|A_i|\le (1/q+3q\tau)n$, this gives
$
d_G(v,A_i)\ge \left(\frac{1}{2}+5\alpha\right)|A_i|.
$
Hence $\delta(G[A_i])\ge (1/2+5\alpha)|A_i|$ for every $i\in[q]$.

Assign each vertex $z\in V_0$ to an index $i\in[q]$ maximizing $d_G(z,A_i)$. Let $Z_i$ be the set of vertices assigned to $i$, and put $B_i=A_i\cup Z_i$. By averaging over the $q$ blocks,
$$
d_G(z,A_i)\ge \frac{1}q d_G(z,A_1\cup\dots\cup A_q)\ge \frac{1}q(d-|V_0|)\ge \left(\frac{1}{q(q+1)}-\frac{\rho}{q}-o(1)\right)n.
$$
By the choice of $\alpha,\tau,\rho$ and $|A_i|\le (1/q+3q\tau)n$, this is at least $5\alpha |A_i|$ for sufficiently large $n$.

Moreover, $|Z_i|\le |V_0|\le\rho n\le\sigma |B_i|$, and $|B_i|\ge |A_i|\ge n/(2q)$. Thus the partition $V(G)=B_1\cup\dots\cup B_q$ satisfies the hypotheses of Proposition~\ref{prop:case_b}. Therefore
$
\operatorname{Cyc}(G)\ge (q-\xi)2^{n/q}.
$

\smallskip
\noindent\textbf{Case (c): $q+1$ near-critical components.}
Suppose $R$ has exactly $q+1$ components $X_1,\dots,X_{q+1}$ with $|X_i|=(1/(q+1)\pm\tau)m$ for every $i\in[q+1]$. For each $i\in[q+1]$, let $A_i=\bigcup_{j\in X_i}V_j$. Since $\rho\ll\tau$, we have
$
|A_i|=\left(\frac{1}{q+1}\pm 2\tau\right)n
$
for every $i\in[q+1]$.

Again, since there are no edges in $G'$ between distinct components of $R$, every vertex $v\in A_i$ satisfies
$
d_G(v,A_i)\ge d_{G'}(v)-|V_0|\ge d-(\mu+2\rho)n.
$
Using $d>n/(q+1)-1$, $|A_i|\le (1/(q+1)+2\tau)n$, the inequality $3\tau<\theta$, and $\mu,\rho\ll\tau$, we get, for sufficiently large $n$,
$
d_G(v,A_i)\ge |A_i|-\theta n.
$

We now apply Proposition~\ref{prop:case_c} with $k=q+1$, $\lambda=2\tau$, and $C=q-\xi$. Its hypotheses are satisfied by the preceding estimates and by the choice $1/n\ll\theta,\rho,2\tau\ll 1/(q+1),1/(q-\xi)$. Since $q+1\ge3$, Proposition~\ref{prop:case_c} gives
$
\operatorname{Cyc}(G)\ge (q-\xi)2^{n/q}.
$

The three cases exhaust all possibilities, completing the proof.
\end{proof}

We now deduce Theorem~\ref{thm:main} from Proposition~\ref{prop:fixed_q}.

\begin{proof}[Proof of Theorem~\ref{thm:main}]
Fix $\varepsilon>0$ and $\xi>0$. For each integer $q$ with $2\le q\le \lceil1/\varepsilon\rceil$, put $\xi_q=\min\{\xi,1/(100q)\}$, and let $n_q$ be the threshold supplied by Proposition~\ref{prop:fixed_q}  with parameter $\xi_q$. Choose $n_0$ to be the maximum of these finitely many thresholds.

Let $n\ge n_0$, and let $G$ be an $n$-vertex $d$-regular graph with $\varepsilon n\le d< n/2$. Put $q=\lfloor n/(d+1)\rfloor\ge 2$. Since $d\ge\varepsilon n$, we have $q\le \lceil1/\varepsilon\rceil$. By the definition of $q$,
$
q(d+1)\le n<(q+1)(d+1).
$
Therefore Proposition~\ref{prop:fixed_q} applies with parameter $\xi_q$ and yields
$
\operatorname{Cyc}(G)\ge (q-\xi_q)2^{n/q}\ge (q-\xi)2^{n/q}.
$ 
This proves the theorem.
\end{proof}

\section{The boundary case}\label{sec:q1_sharpness}
We finally treat the boundary case $d=n/2$. 

\begin{lemma}\label{lem:boundary_dichotomy}
Let $1/m\ll \eta\ll \tau\ll 1$, and let $R$ be an $m$-vertex graph with
$
\delta(R)\ge \left(\frac{1}{2}-\eta\right)m.
$
Then at least one of the following holds:
\begin{enumerate}
\item[(a)] $R$ contains a connected matching covering at least $(1-\tau)m$ vertices;
\item[(b)] $R$ has exactly two connected components $Y_1,Y_2$, and
$|Y_i|=(1/2\pm\tau)m$ for $i=1,2$.
\end{enumerate}
\end{lemma}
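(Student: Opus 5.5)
The plan is to copy the component-counting argument from the proof of Lemma~\ref{lem:trichotomy}, now with a single threshold at $1/2$. Let $C_1,\dots,C_s$ be the components of $R$. Every vertex has all its neighbours in its own component, so
$$|C_j|\ge \delta(R)+1\ge \left(\tfrac12-\eta\right)m$$
for every $j$. Since $\eta\ll 1$, three such components would need more than $m$ vertices, so $s\le 2$. I then split into the cases $s=1$ and $s=2$.

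\emph{Case $s=2$.} The two component sizes sum to exactly $m$, and each is at least $(1/2-\eta)m$. Hence each is at most $m-(1/2-\eta)m=(1/2+\eta)m$. So $|Y_i|=(1/2\pm\eta)m\subseteq(1/2\pm\tau)m$ because $\eta\ll\tau$, and alternative (b) holds.

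\emph{Case $s=1$.} Here $R$ is connected with $\delta(R)\ge(1/2-\eta)m$. Lemma~\ref{lem:long_path} gives a path on at least
$$\min\{m,\,2\delta(R)+1\}\ge (1-2\eta)m$$
vertices. Taking every other edge of this path gives a matching covering all but at most one vertex of the path. This matching lies in the single component of $R$, so it is a connected matching. It covers at least $(1-2\eta)m-1\ge(1-\tau)m$ vertices, using $\eta\ll\tau$ and $1/m\ll\tau$. This is alternative (a).

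I do not expect a genuine obstacle. The only care needed is to check that the additive losses (the $-1$ from dropping a path vertex, and the $2\eta$ slack) are absorbed by the hierarchy $1/m\ll\eta\ll\tau$. This is exactly why the statement allows the weaker $\tau$ in both alternatives.
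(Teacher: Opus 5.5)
Your proposal is correct and matches the paper's proof: the lower bound $|C_j|\ge\delta(R)+1$ forces $s\le 2$, Dirac's long-path lemma with alternating edges handles $s=1$, and the size sum handles $s=2$. Your slack terms ($2\eta$ and $\eta$, where the paper uses $3\eta$ and $2\eta$) are slightly tighter. Either way they are absorbed by $1/m\ll\eta\ll\tau$.
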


\begin{proof}
Let $C_1,\dots,C_s$ be the connected components of $R$. Since every component has size at least $\delta(R)+1$, we have
$
|C_j|\ge \left(\frac{1}{2}-2\eta\right)m$
for every $j\in[s]$,
and hence $s\le 2$.

If $s=1$, then $R$ is connected. By Lemma~\ref{lem:long_path}, $R$ contains a path on at least
$ (1-3\eta)m$
vertices. Taking alternating edges on this path covers all but at most one vertex of the path, and therefore gives a connected matching covering at least $(1-\tau)m$ vertices  for $m$ sufficiently large. Thus (a) holds.

If $s=2$, then $|C_i|\ge (1/2-2\eta)m$ for $i=1,2$. Since $|C_1|+|C_2|=m$, we also have $|C_i|\le (1/2+2\eta)m=(1/2\pm\tau)m$ for $i=1,2$, so (b) holds.
\end{proof}

\begin{proof}[Proof of Proposition~\ref{prop:dirac_rate}]
Choose constants satisfying
$
0<\tau,\zeta\ll\xi$ and $ 0<\rho\ll\mu\ll\eta,\tau,\xi,
$
where $\eta$ and $m_0$ are given by Lemma~\ref{lem:boundary_dichotomy} for this value of $\tau$. 
Apply Lemma~\ref{lem:regularity} to $G$ with parameters $\rho,\mu$ and minimum number of clusters $m_0$. This gives a partition
$
V(G)=V_0\cup V_1\cup\dots\cup V_m
$
and a reduced graph $R$. Increasing $n_0$ if necessary, by Lemma~\ref{lem:regularity}, 
$
\delta(R)\ge \left(\frac{1}{2}-\eta\right)m.
$
Apply Lemma~\ref{lem:boundary_dichotomy} to $R$.

\smallskip
\noindent\textbf{Case (a): almost-spanning connected matching.}
Suppose $R$ contains a connected matching covering $t\ge(1-\tau)m$ vertices. By Proposition~\ref{prop:case_a},
$
\operatorname{Cyc}(G)\ge\operatorname{Cyc}(G')\ge 2^{tL-\zeta mL}.
$
Note that
$
tL-\zeta mL
\ge (1-\tau-\zeta)(n-|V_0|)
\ge (1-\xi)n.
$
We have $\operatorname{Cyc}(G)\ge 2^{(1-\xi)n}$.

\smallskip
\noindent\textbf{Case (b): two balanced components.}
Suppose $R$ has exactly two components $Y_1,Y_2$ with $|Y_i|=(1/2\pm\tau)m$ for $i=1,2$. For $i=1,2$, let
$
A_i=\bigcup_{j\in Y_i}V_j.
$
Then
$
|A_i|=(1/2\pm(\tau+\rho))n.
$
Since there are no edges in $G'$ between $A_1$ and $A_2$, every $v\in A_i$ satisfies
$
d_G(v,A_i)\ge d_{G'}(v)-|V_0|\ge d-(\mu+2\rho)n.
$
Using $d>n/2-1$ and $|A_i|\le(1/2+\tau+\rho)n$, we obtain
$
d_G(v,A_i)\ge |A_i|-\theta n
$
for some $\theta$ satisfying $\theta,\tau+\rho\ll\xi$.
Thus the partition $V(G)=V_0\cup A_1\cup A_2$ satisfies the hypotheses of Proposition~\ref{prop:case_c} with $k=2$ and $\lambda=\tau+\rho$. Therefore
$
\operatorname{Cyc}(G)\ge 2^{(1-3(\tau+\rho))n}\ge 2^{(1-\xi)n}.
$

The two cases exhaust all possibilities, completing the proof.
\end{proof}

\section{Concluding remarks}\label{sec:conclusion}

We proved that, for regular graphs of linear degree below the Dirac threshold, the minimum possible exponential rate of \(\operatorname{Cyc}(G)\) is governed by        $q=\left\lfloor\frac{n}{d+1}\right\rfloor$
and changes in discrete steps.  For \(q\ge2\), the construction \(qK_{n/q}\) shows that both the exponent \(n/q\) and the leading coefficient \(q\) in Theorem~\ref{thm:main} are asymptotically best possible. At the exact Dirac boundary, Proposition~\ref{prop:dirac_rate} gives the correct exponential rate.

A natural next problem is to sharpen the asymptotic staircase to an exact theorem at the clique points.  For fixed \(q\ge2\) and \(s\to\infty\), is \(qK_s\) the exact minimizer of \(\operatorname{Cyc}(G)\) among all \((s-1)\)-regular graphs on \(qs\) vertices?  At the Dirac boundary, it is similarly natural to ask whether \(K_{n/2,n/2}\) is the exact minimizer among \(n\)-vertex \(n/2\)-regular graphs.  

A further refinement would be to prove size-sensitive estimates, counting cyclic subsets of each fixed size.  Such local estimates could reveal more precisely how the cyclic subsets are distributed, rather than only their total number.

\section*{Acknowledgements}
HL and ZY were supported by the Institute for Basic Science (IBS-R029-C4). LW was supported by the NSFC under grant number 12471327, the National Key R\&D Program of China under grant number 2024YFA1013900, the China Scholarship Council, and the Institute for Basic Science (IBS-R029-C4).
MN was supported by the NSFC under grant number 12571381, the China Scholarship Council, and the Institute for Basic Science (IBS-R029-C4).

\medskip

\bibliographystyle{abbrv}
\addcontentsline{toc}{section}{Bibliography}
\bibliography{HC}

@article{DKM25,
  author  = {Dragani\'{c}, N. and Keevash, P. and M\"{u}yesser, A.},
  title   = {Cyclic subsets in regular {Dirac} graphs},
  journal = {International Mathematics Research Notices},
  volume  = {2025},
  number  = {14},
  pages   = {rnaf215},
  year    = {2025}
}

@article{Dirac52,
  author  = {Dirac, G. A.},
  title   = {Some theorems on abstract graphs},
  journal = {Proceedings of the London Mathematical Society},
  series  = {3},
  volume  = {2},
  pages   = {69--81},
  year    = {1952}
}

@article{KSS97,
  author  = {Koml\'{o}s, J. and S\'{a}rk\"{o}zy, G. N. and Szemer\'{e}di, E.},
  title   = {Blow-up lemma},
  journal = {Combinatorica},
  volume  = {17},
  pages   = {109--123},
  year    = {1997}
}

@article{CK09,
  author  = {Cuckler, W. and Kahn, J.},
  title   = {Hamiltonian cycles in Dirac graphs},
  journal = {Combinatorica},
  volume  = {29},
  number  = {3},
  pages   = {299--326},
  year    = {2009}
}

@article{KLSS17,
  author  = {Kim, J. and Liu, H. and Sharifzadeh, M. and Staden, K.},
  title   = {Proof of {Koml\'{o}s}'s conjecture on {H}amiltonian subsets},
  journal = {Proceedings of the London Mathematical Society},
  volume  = {115},
  number  = {5},
  pages   = {974--1013},
  year    = {2017}
}

@article {Chvatal,
    AUTHOR = {Chv\'{a}tal, V.},
     TITLE = {On {H}amilton's ideals},
   JOURNAL = {J. Combinatorial Theory Ser. B},
  FJOURNAL = {Journal of Combinatorial Theory. Series B},
    VOLUME = {12},
      YEAR = {1972},
     PAGES = {163--168},
      ISSN = {0095-8956},
   MRCLASS = {05C99},
  MRNUMBER = {294155},
MRREVIEWER = {M.\ R.\ Garey},
       DOI = {10.1016/0095-8956(72)90020-2},
       URL = {https://doi.org/10.1016/0095-8956(72)90020-2},
}

@incollection {Erdos97,
    AUTHOR = {Erd\H{o}s, P.},
     TITLE = {Some of my favorite problems and results},
 BOOKTITLE = {The mathematics of {P}aul {E}rd\H{o}s, {I}},
    SERIES = {Algorithms Combin.},
    VOLUME = {13},
     PAGES = {47--67},
 PUBLISHER = {Springer, Berlin},
      YEAR = {1997},
      ISBN = {3-540-61032-4},
   MRCLASS = {11-02 (01A60 11-03)},
  MRNUMBER = {1425174},
MRREVIEWER = {G\'{e}rald\ Tenenbaum},
       DOI = {10.1007/978-3-642-60408-9\{_}3}

@incollection{KS96,
  author    = {Koml{\'o}s, J{\'a}nos and Simonovits, Mikl{\'o}s},
  title     = {Szemer{\'e}di's Regularity Lemma and its applications in graph theory},
  booktitle = {Combinatorics, Paul Erd{\H{o}}s is Eighty, Vol. 2},
  series    = {Bolyai Society Mathematical Studies},
  volume    = {2},
  pages     = {295--352},
  publisher = {J{\'a}nos Bolyai Mathematical Society},
  address   = {Budapest},
  year      = {1996}
}

@article{CKLOT16,
  author  = {Csaba, B. and K\"{u}hn, D. and Lo, A. and Osthus, D. and Treglown, A.},
  title   = {Proof of the 1-factorization and {Hamilton} decomposition conjectures},
  journal = {Memoirs of the American Mathematical Society},
  volume  = {244},
  number  = {1154},
  pages   = {1--164},
  year    = {2016}
}

@article{KLS14,
  author  = {Krivelevich, M. and Lee, C. and Sudakov, B.},
  title   = {Robust {Hamiltonicity} of {Dirac} graphs},
  journal = {Transactions of the American Mathematical Society},
  volume  = {366},
  number  = {6},
  pages   = {3095--3130},
  year    = {2014}
}

@article{Montgomery19,
  author  = {Montgomery, R.},
  title   = {Hamiltonicity in random graphs is born resilient},
  journal = {Journal of Combinatorial Theory, Series B},
  volume  = {139},
  pages   = {316--341},
  year    = {2019}
}

@article{SSS03,
  author  = {S\'{a}rk\"{o}zy, G. N. and Selkow, S. M. and Szemer\'{e}di, E.},
  title   = {On the number of {Hamiltonian} cycles in {Dirac} graphs},
  journal = {Discrete Mathematics},
  volume  = {265},
  number  = {1--3},
  pages   = {237--250},
  year    = {2003}
}

@article{FK08,
  author  = {Frieze, A. and Krivelevich, M.},
  title   = {On two {Hamilton} cycle problems in random graphs},
  journal = {Israel Journal of Mathematics},
  volume  = {166},
  pages   = {221--234},
  year    = {2008},
  doi     = {10.1007/s11856-008-1028-8}
}

@article{SWY25,
  author  = {Sun, W. and Wei, S. and Yang, D.},
  title   = {Clique factors in random samplings of regular graphs},
  journal = {arXiv preprint arXiv:2512.20287},
  year    = {2025}
}

@article{HLMS26,
  author  = {Hunter, Z. and Liu, T. and Milojevi\'{c}, A. and Sudakov, B.},
  title   = {Cyclic subsets of tournaments},
  journal = {Random Structures \textup{\&} Algorithms},
  year    = {2026},
  doi     = {10.1002/rsa.70056}
}

@article{KOT10,
  author  = {K\"{u}hn, D. and Osthus, D. and Treglown, A.},
  title   = {Hamilton decompositions of regular tournaments},
  journal = {Proceedings of the London Mathematical Society},
  volume  = {101},
  number  = {1},
  pages   = {303--335},
  year    = {2010},
  doi     = {10.1112/plms/pdp062}
}

\end{document}